\theoremstyle{plain}
\newtheorem{definition}{Definition}
\newtheorem{theorem}{Theorem}
\newtheorem{proposition}{Proposition}
\newtheorem{lemma}{Lemma}
\newtheorem{remark}{Remark}
\numberwithin{equation}{section}
\numberwithin{theorem}{section}
\numberwithin{proposition}{section}
\numberwithin{definition}{section}
\numberwithin{remark}{section}
\numberwithin{corollary}{section}
\numberwithin{lemma}{section}
\newcommand*\bigcdot{\mathpalette\bigcdot@{.5}}
\newcommand*\bigcdot@[2]{\mathbin{\vcenter{\hbox{\scalebox{#2}{$\m@th#1\bullet$}}}}}
\begin{document}
	
	\renewcommand{\thefootnote}{\arabic{footnote}}
	\begin{center}
		{\Large \textbf{ Stochastic differential equations  with respect to optional semimartingales and two reflecting regulated barriers  }} \\[0pt]
		~\\[0pt] Astrid Hilbert\footnote[1]{Mathematics Department, Linnaeus University, Sweden.E-mail: \texttt{astrid.hilbert@lnu.se}}, Imane Jarni\footnote[2]{Mathematics Department, Faculty of Sciences Semalalia, Cadi Ayyad University, Morocco. 
			E-mail: \texttt{jarni.imane@gmail.com, ouknine@uca.ac.ma}} and Youssef Ouknine \textcolor{blue}{\footnotemark[2]}\footnote[3]{Africa Business School, Mohammed VI Polytechnic University, Morocco
			E-mail: \texttt{youssef.ouknine@um6p.ma, }} \footnote[4]{Hassan II Academy of Sciences and Technologies, Rabat, Morocco.}
		\\[0pt]

		~\\[0pt]
	\end{center}

	\renewcommand{\thefootnote}{\arabic{footnote}}

	\begin{abstract}
	In this work, we introduce a new   Skorokhod problem with two reflecting barriers when the trajectories of the driven process  and the barriers  are  right and left limited. We show that this problem has an explicit unique solution in a deterministic case.  Then, we apply our result to study the existence and uniqueness of solutions of reflected stochastic differential equations with respect to optional semimartingales. The study is carried out on a probability space that does not necessarily satisfy  the usual conditions.
		
	\end{abstract}
	{\bf Keyword}: Reflection; Skorokhod problem;  Skorokhod map;  Optional Semimartingales; ; làdlàg processes; Reflecting Stochastic Differential Equations.\\
	\textbf{AMS Subject Classification:} Primary 60H20, Secondary 60G17.
	
\section{Introduction}

In this paper we consider reflected stochastic differential equations of this form:
\begin{equation}
X_{t}=X_{0}+\sigma(.,X)\bigcdot M_{t}+ b(.,X)\bigcdot V_{t}+K_{t}, \;\;t\geq 0,\label{Equ}
\end{equation}
where $M$ is an optional  local martingale, $V$ is an adapted  process of bounded variation. For  given processes $L$ and $U$ with regulated trajectories  such that $L_t\leq U_t$ for $t\geq 0$, $X$ is the reflecting process on the time-dependent interval $[L,U]$, i.e., $L_t\leq X_t\leq U_t$ for all $t\geq 0$, $K$ is a  process of bounded variation which compensates the reflection of $X$,   $\sigma(.,X)\bigcdot M$ and $b(.,X)\bigcdot V$ are the integrals with respect to optional semimartingales ( see Section \ref{sec2} for the definition of such an integral and Section \ref{sec4} for a precise definition of the equation \ref{Equ}).

 	One of the goals of this  work is to study existence and uniqueness of solutions of  stochastic differential equations with two reflecting barriers in a probability  space that does not necessarily satisfy the usual hypothesis of right continuity and completeness of the filtration (\textit{un} usual probability space). Since optional martingales in such a probability space have trajectories with right and left limits and they are not necessarily right  continuous left limited, we  first need to define a suitable reflection problem when the trajectories of the driver and the barriers are right and left limited.  Before explaining  precisely our method and  results, let us recall some  works  concern reflected stochastic differential equations and Skorokhod map:  Reflected SDEs was firstly introduced and dealt by Skorokhod  \cite{skorokhod} in the case where $M$ is a Brownian motion, $\rm{d}V$ is the Lebesgue measure, the lower barrier $L=0$, and the upper barrier $U=\infty$. In this setting, many works have been done to extend  this type of equations for an important class of driving processes or reflecting domains and to study the properties of Skorokhod map  (see, e.g. \cite{el karoui}, \cite{el karoui1}, \cite{burzdy}, \cite{kruk}, \cite{protter1},  \cite{slominski1},\cite{slominski2},\cite{slominski3}, \cite{tanaka}). In these papers, the study of reflected equations is made in the framework where the driven processes and the barriers are continuous or  right continuous left limited. Moreover, the probability space associated is assumed to  satisfy the usual conditions.
  However, probability spaces which are not necessarily satisfying the usual conditions exist. Examples of such probability spaces are given in \cite{fleming} and  \cite{karatzas}. 
 	
 	Stochastic calculus in a general probability space was  firstly studied  by Dellacherie and  Meyer in \cite{dellacherie}, then Gal’{\v c}huk in  \cite{galchuk},\cite{galchuk1},\cite{galchuk82}, and \cite{galchuk85} has developed many results in this context. 
 	
 	Up to our knowledge, there are few works deal with non reflected equation of type \eqref{Equ}. We quote the work of  Lenglart \cite{lenglart},  Gasparyan  \cite{gasparyan}, and the  recent  works of Abdelghani and Melnikov  \cite{abdelghani},\cite{Abdelghani},\cite{Abdelghani1}, and \cite{Abdelghani2} in which the authors use these equations to model financial market with a filtration  that does not necessarily satisfy usual conditions.

 Regarding reflected SDEs driven by optional semimartingales, we quote the work of Jarni and  Ouknine  \cite{jarni}. In this work, the authors have extended the  reflection problem with jump when the driven process is right continuous left limited, introduced firstly by Chaleyat-Maurel, El Karoui and Marchal  in \cite{el karoui1}, to the case when the  driven process has right and left limited trajectories. The  reflection problem  considered in this work is  with one reflecting barrier. Note that the type of reflection in \cite{el karoui1}  the jumps are reflected in the boundary unlike the classical reflection problem dealt by Tanaka \cite{tanaka}, the jumps are absorbed at the boundary. An extension of the reflection problem introduced in \cite{tanaka} for regulated input and one reflecting barrier is studied in \cite{hilbert}. 

 In this work, we are concerned  with  equations with the classical Skorokhod problem with two reflecting barriers when the trajectory of the driven process and the  barriers is only  right and left limited.  In the all mentioned above papers, concern reflected SDEs, the solution is always depending on the paths of $X$, and its construction  is based on the properties of deterministic mapping on the space of continuous, or right continuous left limited or right and left limited functions. Our first objective in this paper is to define and solve a reflection problem in deterministic case  that covers  regulated functions. This definition  coincides with the classical reflection  problem in the case when the driven and the barriers are right continuous  left limited, and it is based on the reflection problem introduced in \cite{hilbert}. We show that this problem has an explicit unique solution. We  also show that the Skorokhod map on the space of right and left limited functions is a functional of the Skorokhod map with one barrier. The second purpose of this work is to study the existence and uniqueness of solutions of equation \eqref{Equ} in an
 \textit{un} usual probability space.
	
	The paper is organized as follows. Section \ref{sec2} is devoted for preliminaries. In the section \ref{sec3}, we introduce a deterministic Skorokhod problem with two reflecting barriers, we show that this problem has an explicit unique solution.
	In the section \ref{sec4}, we define and study  the reflection problem for processes, then we study existence and uniqueness of solutions of reflected stochastic differential equations driven by optional semimartingales in an \textit{un} usual  probability space using fixed point argument. 
\section{Preliminaries}\label{sec2}
	We say that a function $y :\mathbb{R}^{+} \to \mathbb{R}$ is \emph{regulated}  if $y$ has a left limit in each point of $\mathbb{R}^{*,+}$, and has a right limit in each point of $\mathbb{R}^{+}$. We denote by $\mathcal{R}(\mathbb{R}^{+},\mathbb{R})$ the set of regulated functions from $\mathbb{R}^{+}$ to $\mathbb{R}$, where $\mathbb{R}^{+}$ denotes the interval $[0,+\infty[$ and  $\mathbb{R}^{*,+}$ denotes the interval $]0,+\infty[$.

For  $y\in \mathcal{R}(\mathbb{R}^{+},\mathbb{R})$, and $t>0$ we denote by  $y_{t^{-}}:=\lim\limits_{s\nearrow t} y_{s}$ its left limit, and $y_{t^{+}}:=\lim\limits_{s\searrow t} y_{s}$ its right limit. We set  $\Delta^{+} y_{t}:=y_{t^{+}}-y_{t}$, and $\Delta^{-} y_{t}:=y_{t}-y_{t^{-}}.$

We consider a complete probability space $(\Omega,\mathcal{F},\mathbb{P})$ equipped with a filtration $\mathbb{F}=(\mathcal{F}_{t})_{t\geq 0}$, which is not assumed to be right or left continuous and it is not assumed to be completed.
 We  introduce the following families of $\sigma$- algebras: $$\mathbb{F}_{+}=(\mathcal{F}_{t_{+}})_{t\geq 0},\;\;\;
\mathbb{F}^{\mathbb{P}}=(\mathcal{F}_{t}^{\mathbb{P}})_{t\geq 0},\;\;\;
\mathcal{F}_{+}^{\mathbb{P}}=(\mathcal{F}_{t_{+}}^{\mathbb{P}})_{t\geq 0},$$
where $\mathcal{F}_{t_{+}}={\displaystyle\bigcap}_{t<s}\mathcal{F}_{s}$.  $\mathbb{F}^{\mathbb{P}}$ and $\mathbb{F}_{+}^{\mathbb{P}}$ are the completion of $\mathbb{F}$ and  $\mathbb{F}_{+}$  under $\mathbb{P}$, respectively.

If $\mathbb{G}=(G_t)_{t\geq 0}$, is a nondeacresing family of $\sigma$-algebras, 
$\mathcal{O}(\mathbb{G})$ and $\mathcal{P}(\mathbb{G})$ denotes the $\sigma$- algebras on $\Omega\times\mathbb{R}^{+}$, with  $\mathcal{O}(\mathbb{G})$ is generated by all $\mathbb{G}$-adapted, right continuous and left limited processes and  $\mathcal{P}(\mathbb{G})$ is  generated by all $\mathbb{G}$- adapted, left continuous and right limited processes.\\
$\mathcal{O}(\mathbb{F})$- or $\mathcal{P}(\mathbb{F})$ measurable processes are said to be optional or predictable respectively.

A process $M$ is said to be  an optional martingale, and we write $M\in\mathcal{M}$, if $M$ is an optional process and there exists  an $\mathcal{F}$-measurable random variable $\hat{M}$ such that $\mathbb{E}(|\hat{M}|)<+\infty$ and $M_{\tau}=\mathbb{E}[\hat{M} | \mathcal{F}_{\tau}]$ a.s on the set $\{\tau<\infty\}$ for every stopping time $\tau$.
We denote by $\mathcal{M}^{2}$ the set of optional martingale such that $\mathbb{E}(|\hat{M}|^{2})<+\infty$.

A process $M$ is said to be an optional local martingale if there exists a sequence $(R_{n},M^{n})$, $n\in\mathbb{N}$, where $R_{n}$ is a  sequence of stopping times in wide sens, and $M^{n}$ is an optional martingale, $R_{n}\uparrow\infty$ a.s, $M=M^{n}$ on $[0,R_{n}]$ and the random variable $M_{R_{n}^{+}}$ is integrable for any $n\in\mathbb{N}$.

From Theorem 4.10 in \cite{galchuk}, an optional local martingale  $M$  has the following decomposition; $M=M^c+M^d+M^g$, where $M^c$ is a continuous local martingale, $M^d$ is right continuous left limited local martingale and $M^g$ is left continuous right limited local martingale. We denote by $M^r$ the process $M^r:=M^c+M^d$.

Let $V$ be a process of bounded variation, then $V$ had the following decomposition:
$
V_{t}=V^{c}_{t}+\displaystyle\sum_{0<s\leq t} \Delta^{-} V_{s}+\displaystyle\sum_{0 \leq s<t} \Delta^{+} V_{s},
$
where $V^{c}$ is a continuous process of bounded variation. 
We denote by $V_{t}^{r}:=V_{t}^{c}+V_{t}^{d}=V^{c}_{t}+\displaystyle\sum_{0<s\leq t} \Delta^{-} V_{s}$,  by $V_{t}^{g}=\displaystyle\sum_{0\leq s<t} \Delta^{+} V_{s}$, and by $|V|_{t}$ the total variation of the process $V$ on the interval $[0, t]$.

Let $X$ be an optional semimartingale with the following decomposition: $X=M+V$, where $M$ is an optional local martingale, and $V$ is an optional process of bounded variation.
$H\bigcdot X$ denotes the following stochastic integral:
$$H\bigcdot X_t=\int_{]0,t]} H_{s^-} \rm{d}M_s^r+\int_{[0,t[} H_{s^-} \rm{d}M_{s^+}^g+\int_{]0,t]} H_{s^-} \rm{d}V_s^r+\int_{[0,t[} H_{s^-} \rm{d}V_{s^+}^g.$$
$\displaystyle\int_{]0,t]}H_{s^{-}} dM_{s}^{r}$ is the usual integral with respect to right continuous left limited local martingale and $\displaystyle\int_{[0,t[} H_{s^-}\rm{d}M^{g}_{s^{+}}$ is Gal’{\v c}huk stochastic integral with respect to left continuous and right limited local martingale (see \cite{galchuk}). $\displaystyle\int_{]0,t]}H_{s^{-}} dV_{s}^{r}$ and $\displaystyle\int_{[0,t[} H_{s^-}\rm{d}V^{g}_{s^{+}}$ are in Stieltjes sens.
$V^r$ and  $V^g$ denote the right continuous part of $V$ and left continuous part of $V$, respectively.

 we denote by  $a \land b=\min(a,b)$, and   by $a \lor b=\max(a,b)$.

In what follows, we assume that all the regulated  processes
considered do not jump at $0$.

\section{Skorokhod problem with two reflecting barriers}\label{sec3}
We motivate  the definition of the reflection Skorokhod  problem with  two reflecting barriers by the following example.

Let $t_0>0$  and $p,q\in\mathbb{R}$. we consider $0$ and $a$ ($a>0$)  as the lower and the upper barrier, respectively . We consider the regulated function  $y$ defined by:
  $$y_{t}=p1_{\{t=t_{0}\}}+q 1_{\{t>t_{0}\}},\;\;\text{for}\;t\geq 0.$$
 We know that  a solution $(x,k)$ of  Skorokhod problem associated with $y$ and the barriers $0$ and $a$  should satisfy:
 $$x_t=y_t+k_t,\;\;0\leq x_t\leq a\;\;\text{for every} \;t\geq 0,\;\;\text{and} \;\;k\; \text{has a bounded variation}.$$
 We have $$y_{t_0}=p\;\text{ and }\;\;y_{t^+_0}=q.$$
If $p<0$ and $q>a$, then $k$ should react in order to push $y$. In this case,  it is natural to have  $$x_{t_0}=0\;\;\text{and}\;\;x_{t_0^+}=a.$$ 
Therefore, we get
 $$\Delta^+k_{t_0}=\Delta^+x_{t_0}-\Delta^+y_{t_0}=a-q+p<0,$$
 it follows that $k$ has a jump on the right at $t_0$.
 
 If  $p\geq 0$ or  $q\leq a$. We assume for example $p< 0$, then $y_{t_0}<0$ and $y_{t_0^+}\leq a$. 
 A natural solution of this reflection problem should be constructed  by  alternating between the two standard one sided reflecting operators which are  given  in Proposition \ref{r1} and \ref{r2}.
 In this case, the expression of 
 $k_{t_0}$  will depend on the solution of the reflection problem with one barrier. From Proposition \ref{r1}, we have
 $k_{t_0}=k_{t_0^+}=-(y_{t_0}\land y_{{t^+_0}})$ but at the same time we should not allow $k_{t_0}$ to  exceeds $a-y_{t_0}$, because otherwise, we may have many oscillations of  $k$  at $t_0$ which means that $k$ may have an infinite variation at $t_0$. Thus, a natural value of $k$ at $t_0$ is
 \begin{equation*}
 k_{t_0}=-\big((y_{t_0}\land y_{t_0^+})\lor(y_{t_0}-a)\big),\label{v1}
 \end{equation*}and
 \begin{equation*}
 k_{t_0^+}=-\big((y_{t_0}\land y_{t_0^+})\lor(y_{t_0^+}-a)\big).\label{v2}
 \end{equation*}
 We compute the right jump of $k$ at $t_0$, we obtain:
 \begin{center}
 	If $\Delta^+k_{t_0}>0$,  $x_{t_0}=a$ and $x_{t^+_0}=0$,  
 \end{center}
 and
 \begin{center}
 	If $\Delta^+k_{t_0}<0$, $x_{t_0}=0$ and $x_{t^+_0}=a$.
 \end{center}
 Inspired by this example, we define the reflection problem with two reflecting barriers as follows.
\begin{definition}\label{def1}
 Let  $y$, $l$ and $u$ be in  $\mathcal{R}(\mathbb{R}^{+},\mathbb{R})$  such that $l_0\leq y_{0}\leq u_0$. We say that a pair of functions  $(x,k)$ in $\mathcal{R}(\mathbb{R}^+,\mathbb{R})\times \mathcal{R}(\mathbb{R}^+,\mathbb{R})$  is a solution of the  reflection problem associated with $y$ and the barriers $l$ and $u$ if:
	\begin{itemize}
		\item[(i)]$x=y+k=y+\phi^1-\phi^2$;
		\item[(ii)] $l\leq x \leq u$;
		\item[(iii)] $\phi^1$ and  $\phi^2$ are non-decreasing,  with $\phi^1_{0}=\phi^2_{0}=0$;
		\item[(iv)]
		\begin{equation*}
		\int_{[0,+\infty[} \big((x_{s}-l_s)\land (x_{s^{+}}-l_{s^+})\big)\rm{d}\phi^{1,r}_{s}=\int_{[0,+\infty[} \big((u_s-x_{s})\land (u_{s^+}-x_{s^{+}})\big)\rm{d}\phi^{2,r}_{s}=0.
		\end{equation*}
\item[(v)] 	For every $t\in\mathbb{R}^{+}$,
				\begin{equation}
			\sum_{s\leq t}(x_{s^{+}}-l_{s^+})\Delta^{+}\phi^1_s= \sum_{s\leq t}(u_{s}-x_{s})\Delta^{+}\phi^1_s=0. \label{d1}
			\end{equation}
			\begin{equation}
			\sum_{s\leq t}(u_{s^+}-x_{s^{+}})\Delta^{+}\phi^2_s= \sum_{s\leq t}(x_{s}-l_{s})\Delta^{+}\phi^2_s=0.\label{d2}
			\end{equation}
	\end{itemize}
 This problem will be denoted by $RP^u_l(y)$.
\end{definition}

\begin{remark}
	\begin{itemize}
\item[i)] The statement (v) in Definition \ref{def1} is equivalent to the following:
\begin{center}
$x_t=u_t$ and $x_{t^+}=l_{t^+}$ for every $t$ such that $\Delta^+k_t>0$,
\end{center}
and
\begin{center}
$x_t=l_t$ and $x_{t^+}=u_{t^+}$ for every $t$ such that $\Delta^+k_t>0.$
\end{center}
\item[ii)] Let $(x,k)$ be a solution of the reflection problem $RP()$. By a slight generalization of the result in \cite{kela}, one can show that the statements
\begin{equation*}
\int_{[0,+\infty[} \big((x_{s}-l_s)\land (x_{s^{+}}-l_{s^+})\big)\rm{d}\phi^{1,r}_{s}=0\;\text{and}\; \int_{[0,+\infty[} \big((u_s-x_{s})\land (u_{s^+}-x_{s^{+}})\big)\rm{d}\phi^{2,r}_{s}=0
\end{equation*}
in Definition \ref{def1} are equivalent to the following:
$$ \text{ For every t such that } \Delta^+\phi^1_t=0\;\text{ and } \;\phi^{1}_{t^-}< \phi^{1}_s \;\text{for all}\; s> t, \; (x_{t}-l_t)\land (x_{t^{+}}-l_{t^+})=0$$
and
$$\text{ For every t such that } \Delta^+\phi^2_t=0\;\text{ and }\;\phi^{2}_{t^-}< \phi^{2}_s \;\text{for all}\; s> t, \;\; (u_t-x_t)\land (u_{t^+}-x_{t^{+}})=0,$$
respectively.
	\end{itemize}
\end{remark}

\begin{lemma}\label{lem1}
Let  $y$, $l$ and $u$ be in  $\mathcal{R}(\mathbb{R}^{+},\mathbb{R})$  such that $l\leq u$ and $l_0\leq y_{0}\leq u_0$. Assume that $\inf\limits_{s\leq t}(u_s-l_s)>0$ for every $t\geq 0$. Let $\big(x, k=(\phi^1,\phi^2)\big)$ and $(\tilde{x},\tilde{k}=(\tilde{\phi}^1,\tilde{\phi}^2)\big)$  be two solutions associated to $RP^u_l(y)$. Then, we have:
\begin{itemize}
\item[(i)] \begin{equation*}
\{t : \Delta^{+}\phi^1_t>0\}\cap\{t : \Delta^{+}\phi^2_t>0\}=\emptyset.\label{9}
\end{equation*}
\item[(ii)] \begin{equation*}
\{t : \Delta^{+}\phi^1_t>0\} =\{t : \Delta^{+}\tilde{\phi}^1_t>0\}; \label{10}
\end{equation*}
\begin{equation*}
\{t : \Delta^{+}\phi^2_t>0\} =\{t : \Delta^{+}\tilde{\phi}^2_t>0\}. \label{11}
\end{equation*}
\end{itemize}
\end{lemma}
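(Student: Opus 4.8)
The plan is to push everything down to the behaviour of the data $(y,l,u)$ at a single right-jump time, using only the sign structure of conditions (ii), (iii) and (v) of Definition \ref{def1}. The first step I would carry out is an \emph{extraction lemma}: since $l\le x\le u$ on $\mathbb{R}^{+}$, passing to right limits gives $l_{t^{+}}\le x_{t^{+}}\le u_{t^{+}}$ for every $t$; hence in the series \eqref{d1}--\eqref{d2} each summand is a product of a non-negative quantity ($x_{s^{+}}-l_{s^{+}}$, $u_s-x_s$, etc.) with $\Delta^{+}\phi^{i}_s\ge 0$ (monotonicity of $\phi^{i}$). A convergent series of non-negative terms that vanishes for every upper limit $t$ must vanish termwise, so I obtain the pointwise facts: at any $t$ with $\Delta^{+}\phi^1_t>0$ one has $x_t=u_t$ and $x_{t^{+}}=l_{t^{+}}$; at any $t$ with $\Delta^{+}\phi^2_t>0$ one has $x_t=l_t$ and $x_{t^{+}}=u_{t^{+}}$.

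\textbf{Part (i).} This is then immediate: if $\Delta^{+}\phi^1_t>0$ and $\Delta^{+}\phi^2_t>0$ held simultaneously, the two lines above would force $u_t=x_t=l_t$, contradicting $u_t-l_t\ge \inf_{s\le t}(u_s-l_s)>0$.

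\textbf{Part (ii).} Here I would convert a right-jump of the reflection into an identity involving only $(y,l,u)$. From $x=y+\phi^1-\phi^2$ one has $\Delta^{+}x_t=\Delta^{+}y_t+\Delta^{+}\phi^1_t-\Delta^{+}\phi^2_t$. Fix $t$ with $\Delta^{+}\phi^1_t>0$; by Part (i) $\Delta^{+}\phi^2_t=0$, and by the extraction step $x_t=u_t$, $x_{t^{+}}=l_{t^{+}}$, so $l_{t^{+}}-u_t=\Delta^{+}y_t+\Delta^{+}\phi^1_t$, whence $\Delta^{+}\phi^1_t=(l_{t^{+}}-u_t)-\Delta^{+}y_t>0$. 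Conversely, if $(l_{t^{+}}-u_t)-\Delta^{+}y_t>0$ at some $t$, then $x_{t^{+}}\ge l_{t^{+}}$ and $x_t\le u_t$ give $\Delta^{+}x_t\ge l_{t^{+}}-u_t>\Delta^{+}y_t$, so $\Delta^{+}\phi^1_t-\Delta^{+}\phi^2_t>0$ and hence $\Delta^{+}\phi^1_t>0$. This yields the intrinsic characterization $\{t:\Delta^{+}\phi^1_t>0\}=\{t: l_{t^{+}}-u_t>\Delta^{+}y_t\}$, a set independent of the chosen solution; applying the identical argument to $(\tilde x,\tilde k)$ proves the first equality of (ii). The second is obtained symmetrically, with $\{t:\Delta^{+}\phi^2_t>0\}=\{t:\Delta^{+}y_t>u_{t^{+}}-l_t\}$.

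\textbf{Expected main obstacle.} There is no deep difficulty; the only delicate point is the bookkeeping of signs — legitimately inheriting $l_{t^{+}}\le x_{t^{+}}\le u_{t^{+}}$ from $l\le x\le u$, checking that every term of \eqref{d1}--\eqref{d2} is genuinely non-negative so that termwise vanishing is valid, and applying $\Delta^{+}x=\Delta^{+}y+\Delta^{+}\phi^1-\Delta^{+}\phi^2$ with the correct orientation. The hypothesis $\inf_{s\le t}(u_s-l_s)>0$ is used precisely once, to exclude $u_t=l_t$ in Part (i); Part (ii) then rests on Part (i) together with the intrinsic description of the two jump sets.
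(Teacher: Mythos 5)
Your proof is correct and follows essentially the same route as the paper's: condition (v) of Definition \ref{def1} forces $x_t=u_t$, $x_{t^+}=l_{t^+}$ at right-jump times of $\phi^1$ (and symmetrically for $\phi^2$), part (i) then follows from the strict gap $u_t-l_t>0$, and part (ii) from the inequality $\Delta^{+}x_t\ge l_{t^+}-u_t$ valid for any solution. The only cosmetic difference is that you package part (ii) as an intrinsic characterization $\{t:\Delta^{+}\phi^1_t>0\}=\{t:l_{t^+}-u_t>\Delta^{+}y_t\}$ depending only on $(y,l,u)$, whereas the paper compares the two solutions directly; the underlying computation is identical, and your version has the minor merit of making explicit that one should use $\Delta^{+}\tilde{\phi}^1_t\ge\Delta^{+}\tilde{x}_t-\Delta^{+}y_t$ (rather than equality) when $\Delta^{+}\tilde{\phi}^2_t$ is not yet known to vanish.
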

\begin{proof}	
	\begin{itemize}
\item[(i)] Assume that there exists $t$ such that $\Delta^{+}\phi^1_t>0$ and $\Delta^{+}\phi^2_t>0$, from the statement (v) in Definition \ref{def1}, we get $x_t=u_t=l_t$ and $x_{t^+}=l_{t^+}=u_{t^+}$. This contradicts the fact that 
 $\inf\limits_{s\leq t}(u_s-l_s)>0$ for every $t\geq 0$, then $\{s : \Delta^{+}\phi^1_s>0\}\cap\{s : \Delta^{+}\phi^2_s>0\}=\emptyset$.
 \item[(ii)] Let $t\in\{s : \Delta^{+}\phi^1_s>0\}$, then $x_t=u_t$ and $x_{t^+}=l_{t^+}$, and from the first statement of this Lemma, we get
$$\Delta^+y_t=l_{t^+}-u_t-\Delta^{+}\phi^1_t.$$
On the other hand, we have
\begin{equation*}
\begin{split}
\Delta^+\tilde{\phi}^1_t&=\Delta^+\tilde{x}_t-\Delta^+y_t\\
&=\Delta^+\tilde{x}_t-l_{t^+}+u_t+\Delta^{+}\phi^1_t.
\end{split}
\end{equation*}
 From the statement (ii) in Definition \ref{def1}, $\Delta^+\tilde{x}_t\geq l_{t^+}-u_t$, then we obtain
 $$\Delta^+\tilde{\phi}^1_t\geq \Delta^{+}\phi^1_t>0,$$
this implies,
$$ \{t : \Delta^{+}\phi^1_t>0\} \subseteq\{t : \Delta^{+}\tilde{\phi}^1_t>0\}. $$
 Similarly, we show that  
 $$\{t : \Delta^{+}\tilde{\phi}^1_t>0\}\subseteq \{t : \Delta^{+}\phi^1_t>0\}
 \;\text{ and }\;\{t : \Delta^{+}\phi^2_t>0\} =\{t : \Delta^{+}\tilde{\phi}^2_t<0\}.$$
 	\end{itemize}
\end{proof}	
The following result establishes the uniqueness of the solution of the reflection problem RP().
\begin{theorem}\label{uniqueness}
Let  $y$, $l$ and $u$ be in  $\mathcal{R}(\mathbb{R}^{+},\mathbb{R})$  such that $l\leq u$ and  $l_0\leq y_{0}\leq u_0$. Suppose that $\inf\limits_{s\leq t}(u_s-l_s)>0$ for every $t\geq 0$, then, the uniqueness of  reflection problem $RP^u_l(y)$ holds.
\end{theorem}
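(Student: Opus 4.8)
The plan is to show that any two solutions $\big(x,k=(\phi^1,\phi^2)\big)$ and $\big(\tilde x,\tilde k=(\tilde\phi^1,\tilde\phi^2)\big)$ of $RP^u_l(y)$ coincide. Since $k=\phi^1-\phi^2$ and $\tilde k=\tilde\phi^1-\tilde\phi^2$, the difference
$$z:=x-\tilde x=k-\tilde k=(\phi^1-\tilde\phi^1)-(\phi^2-\tilde\phi^2)$$
is a regulated function of \emph{bounded variation}, and $z_0=x_0-\tilde x_0=0$ because, by (i) and (iii), $x_0=y_0=\tilde x_0$. Thus it suffices to prove $z\equiv 0$; then $k=x-y=\tilde x-y=\tilde k$, and uniqueness follows.

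First I would dispose of the right‑jump times. If $\Delta^+k_t\neq 0$ then, by Lemma~\ref{lem1}(ii), also $\Delta^+\tilde k_t\neq 0$, and by Lemma~\ref{lem1}(i) the time $t$ is a right‑jump time of $\phi^1$ (resp.\ $\phi^2$) iff it is one of $\tilde\phi^1$ (resp.\ $\tilde\phi^2$). In each case the equivalent reformulation of (v) recorded in the Remark following Definition~\ref{def1} pins $x_t$ and $x_{t^+}$ — and identically $\tilde x_t$ and $\tilde x_{t^+}$ — to values determined solely by $l$ and $u$ (either $x_t=u_t,\,x_{t^+}=l_{t^+}$ or $x_t=l_t,\,x_{t^+}=u_{t^+}$). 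Hence $z_t=z_{t^+}=0$ at every right‑jump time, while at every other time $\Delta^+z_t=\Delta^+k_t-\Delta^+\tilde k_t=0$ as well. So $z$ has no right jumps (it is càdlàg of bounded variation), and moreover the right‑jump measures of $k$ and $\tilde k$ agree, which lets us replace $z$ by $k^r-\tilde k^r=(\phi^{1,r}-\phi^{2,r})-(\tilde\phi^{1,r}-\tilde\phi^{2,r})$.

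The core is then the ``the difference is pushed back toward $0$'' mechanism combined with an energy identity. Because $z$ is càdlàg of bounded variation with $z_0=0$, the change‑of‑variables formula gives
$$z_t^2=\int_{(0,t]}(z_{s^-}+z_s)\,dz_s=\int_{(0,t]}(z_{s^-}+z_s)\,d\phi^{1,r}_s-\int_{(0,t]}(z_{s^-}+z_s)\,d\phi^{2,r}_s-\int_{(0,t]}(z_{s^-}+z_s)\,d\tilde\phi^{1,r}_s+\int_{(0,t]}(z_{s^-}+z_s)\,d\tilde\phi^{2,r}_s .$$
On the support of $d\phi^{1,r}$ one has (via condition (iv), in the support form given in the Remark, together with the right‑jump analysis above) $x_s=l_s$, hence $z_s=l_s-\tilde x_s\le 0$ and, after controlling the left‑limit, $z_{s^-}\le 0$; so the first integral is $\le 0$. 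Symmetrically $d\phi^{2,r}$ is carried by $\{x=u\}$, $d\tilde\phi^{1,r}$ by $\{\tilde x=l\}$, $d\tilde\phi^{2,r}$ by $\{\tilde x=u\}$, so all four terms are $\le 0$ and $z_t^2\le 0$, i.e.\ $z\equiv 0$.

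I expect the main obstacle to be precisely the bookkeeping of the làdlàg structure — showing that the measures $d\phi^{1,r},d\phi^{2,r}$ and their tildes really are carried, in the appropriate one‑sided sense, by the sets $\{x=l\}$, $\{x=u\}$ \emph{and} that the left‑limit terms $z_{s^-}$ carry the correct sign there, when only the integral complementarity conditions (iv) and the jump conditions (v) are available. This requires splitting each $d\phi^{i,r}$ into its continuous part (where $z_{s^-}=z_s$ off a countable, $d\phi^{i,c}$‑null set) and its left‑jump atoms, and at the atoms invoking the reformulations in the Remark together with the disjointness of the right‑jump sets from Lemma~\ref{lem1}; the hypothesis $\inf_{s\le t}(u_s-l_s)>0$ enters here (and in Lemma~\ref{lem1}) to exclude $x$ (or $\tilde x$) being simultaneously at both barriers and to keep the jump configurations under control.
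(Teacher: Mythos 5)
Your overall strategy is sound and genuinely different in organization from the paper's: you first use Lemma \ref{lem1} and condition (v) to show that at every right-jump time both solutions are pinned to the same barrier values, so that $z=x-\tilde x$ has no right jumps and reduces to the right-continuous difference $k^r-\tilde k^r$; this first reduction is correct and is in fact a clean repackaging of what the paper does inside its estimate (its terms $I_3,\dots,I_7$). However, there is one genuine flaw in the energy step. You write $z_t^2=\int_{(0,t]}(z_{s^-}+z_s)\,dz_s$ and then need a sign for $z_{s^-}$ on the supports of the measures $d\phi^{i,r}$, proposing to obtain $z_{s^-}\le 0$ at the left-jump atoms of $d\phi^{1,r}$ from the Remark's reformulation of (iv). This cannot work: conditions (iv) and (v) only constrain $x_s$ and $x_{s^+}$ where the pushing measures act (and $x_{s^+}$ at right jumps); they say nothing about $x_{s^-}$ at a left-jump atom of $\phi^{1}$. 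At such a point $x_{s^-}$ and $\tilde x_{s^-}$ can a priori sit anywhere in $[l_{s^-},u_{s^-}]$, so no pointwise sign for $z_{s^-}$ is derivable before uniqueness is established — trying to prove it there is circular. (The part of the same sentence asserting $x_s=l_s$ on $\operatorname{supp} d\phi^{1,r}$ is also slightly off — (iv) only gives $x_s=l_s$ \emph{or} $x_{s^+}=l_{s^+}$ — but that is harmless once $z$ is known to be right-continuous, since then $z_s=z_{s^+}\le 0$ in either case.)

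The gap is fixable without controlling $z_{s^-}$ at all: since $z_{s^-}=z_s-\Delta^- z_s$ and $\int_{(0,t]}\Delta^- z_s\,dz_s=\sum_{0<s\le t}(\Delta^- z_s)^2\ge 0$, one has
\begin{equation*}
z_t^2=\int_{(0,t]}(z_{s^-}+z_s)\,dz_s=2\int_{(0,t]}z_s\,dz_s-\sum_{0<s\le t}(\Delta^- z_s)^2\le 2\int_{(0,t]}z_s\,dz_s,
\end{equation*}
and only the sign of $z_s$ (not $z_{s^-}$) is needed on each support, which your argument does deliver. This is essentially how the paper sidesteps the issue: its Proposition \ref{r3} is an inequality in which the integrand against $dA^r$ is $A_s+A_{s^+}$ — the current value and the \emph{right} limit, exactly matching the two-sided form $(x_s-l_s)\wedge(x_{s^+}-l_{s^+})$ appearing in (iv) — with the squared-jump terms already discarded as non-positive. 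With that one-line correction your proof goes through and is, if anything, more transparent than the paper's seven-term decomposition via the identity $p\wedge q=\tfrac12(p+q-|p-q|)$.
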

\begin{proof}
Let $\big(x, k=(\phi^1,\phi^2)\big)$ and $(\tilde{x},\tilde{k}=(\tilde{\phi}^1,\tilde{\phi}^2)\big)$ be two solutions of $RP^u_l(y)$.  We set $A= x-\tilde{x}=
k-\tilde{k}$, which has a bounded variation. It follows from Proposition \ref{r3} that:
\begin{equation*}
\begin{split}
(x_t-\tilde{x}_t)^2&\leq  \int_{]0,t]} (x_s-\tilde{x}_s+x_{s^+}-\tilde{x}_{s^+}) \rm{d}(\phi^{1,r}-\tilde{\phi}^{1,r})_s -\int_{]0,t]} (x_s-\tilde{x}_s+x_{s^+}-\tilde{x}_{s^+}) \rm{d}(\phi^{2,r}-\tilde{\phi}^{2,r})_s\\
&+2\sum_{0\leq s<t}(x_{s^+}-\tilde{x}_{s^+})(\Delta^+\phi^1_s-\Delta^+\tilde{\phi}^1_s)-2\sum_{0\leq s<t}(x_{s^+}-\tilde{x}_{s^+})(\Delta^+\phi^2_s-\Delta^+\tilde{\phi}^2_s)\\
&-\sum_{0<s\leq t}(\Delta^+x_s-\Delta^+\tilde{x}_s)(\Delta^-x_s-\Delta^-\tilde{x}_s),\\
\end{split}
\end{equation*}
Using the  formula: $p\land q=\frac{1}{2}(p+q-|p-q|)$, we obtain:
\begin{equation*}
\begin{split}
x_{s}-\tilde{x}_{s}+ x_{s^{+}}-\tilde{x}_{s^{+}} &=2 \big((x_{s}-l_{s})\land (x_{s^{+}}-l_{s^{+}})\big)-2\big(( \tilde{x}_{s}-l_{s})\land (\tilde{x}_{s^{+}}-l_{s^{+}})\big) \\
&+|\Delta^{+} x_{s}-\Delta^{+} l_{s}|-|\Delta^{+} \tilde{x}_{s}-\Delta^{+} l_{s}|\\
&=-2 \Big(\big((u_{s}-x_{s})\land (u_{s^{+}}-x_{s^{+}})\big)-\big(( u_{s}-\tilde{x}_{s})\land (u_{s^{+}}-\tilde{x}_{s^{+}})\big)\Big)\\
&-|\Delta^{+} x_{s}-\Delta^{+} u_{s}|+|\Delta^{+} \tilde{x}_{s}-\Delta^{+} u_{s}|.
\end{split}
\end{equation*}
Consequently, 
\begin{equation*}
\begin{split}
(x_{t}-\tilde{x}_{t})^{2}&\leq 2\int_{]0,t]}\big( (x_{s}-l_{s})\land (x_{s^{+}}-l_{s^{+}})\big)-\big(( \tilde{x}_{s}-l_{s})\land (\tilde{x}_{s^{+}}-l_{s^{+}}) \big)\rm{d}(\phi^{1,r}_{s}-\tilde{\phi}_{s}^{1,r})\\
&+ 2\int_{]0,t]}  \big((u_{s}-x_{s})\land( u_{s^{+}}-x_{s^{+}})\big)-\big((u_{s}-\tilde{x}_{s})\land( u_{s^{+}}-\tilde{x}_{s^{+}})\big) \rm{d}(\phi^{2,r}_{s}-\tilde{\phi}_{s}^{2,r})\\
&+ \int_{]0,t]} \big(|\Delta^{+} x_{s}-\Delta^{+} l_{s}|-|\Delta^{+} \tilde{x}_{s}-\Delta^{+} l_{s}|\big) \rm{d}(\phi^{1,r}_{s}-\tilde{\phi}_{s}^{1,r})\\
& +\int_{]0,t]} \big(|\Delta^{+} x_{s}-\Delta^{+} u_{s}|-|\Delta^{+} \tilde{x}_{s}-\Delta^{+} u_{s}|\big) \rm{d}(\phi_{s}^{2,r}-\tilde{\phi}_{s}^{2,r})\\
&+2\sum_{0\leq s<t}(x_{s^+}-\tilde{x}_{s^+})(\Delta^+\phi^1_s-\Delta^+\tilde{\phi}^1_s)-2\sum_{0\leq s<t}(x_{s^+}-\tilde{x}_{s^+})(\Delta^+\phi^2_s-\Delta^+\tilde{\phi}^2_s)\\
&-\sum_{0<s\leq t}(\Delta^+x_s-\Delta^+\tilde{x}_s)(\Delta^-x_s-\Delta^-\tilde{x}_s)\\
&= I_{1}(t)+I_{2}(t)+I_{3}(t)+I_{4}(t)+I_{5}(t)+I_{6}(t)+I_{7}(t).
\end{split}
\end{equation*}
On one hand, it follows from the statements (ii), (iv) and (v) in Definition  \ref{def1} that $I_{1}$,  $I_{2}$, $I_5$ and $I_6$  are non-positive. On the other hand, for every $s\in [0,t]$ we have
\begin{equation*}
\begin{split}
\big(|\Delta^{+} x_{s}-\Delta^{+} l_{s}|-|\Delta^{+} \tilde{x}_{s}-\Delta^{+} l_{s}|\big)&= \big(|\Delta^{+} x_{s}-\Delta^{+} l_{s}|-|\Delta^{+} \tilde{x}_{s}-\Delta^{+} l_{s}|\big)1_{\{\Delta^+k_s>0\}}\\
&+ \big(|\Delta^{+} x_{s}-\Delta^{+} l_{s}|-|\Delta^{+} \tilde{x}_{s}-\Delta^{+} l_{s}|\big)1_{\{\Delta^+k_s<0\}} \\
&+ \big(|\Delta^{+} x_{s}-\Delta^{+} l_{s}|-|\Delta^{+} \tilde{x}_{s}-\Delta^{+} l_{s}|\big)1_{\{\Delta^+k_s=0\}} 
\end{split}
\end{equation*}
from the statement (v) in Definition \ref{def1} and Lemma \ref{lem1}, we obtain:
\begin{equation*}
\begin{split}
\big(|\Delta^{+} x_{s}-\Delta^{+} l_{s}|-|\Delta^{+} \tilde{x}_{s}-\Delta^{+} l_{s}|\big)& =
\big(| l_{s}-u_{s}|-| l_{s}-u_{s}|\big)1_{\{\Delta^+k_s>0\}} \\
&+ \big(|u_{s^+}-l_{s^+}|-|u_{s^+}-l_{s^+}|\big)1_{\{\Delta^+k_s<0\}} \\
&+ \big(|\Delta^{+} y_{s}-\Delta^{+} l_{s}|-|\Delta^{+} y_{s}-\Delta^{+} l_{s}|\big)1_{\{\Delta^+k_s=0\}} )\\
&=0.
\end{split}
\end{equation*}
Similarly, using again the statement (v) in Definition \ref{def1} and Lemma \ref{lem1}, we get
$$ \big(|\Delta^{+} x_{s}-\Delta^{+} u_{s}|-|\Delta^{+} \tilde{x}_{s}-\Delta^{+} u_{s}|\big)=0,\;\text{ and }\;(\Delta^+x_s-\Delta^+\tilde{x}_s)(\Delta^-x_s-\Delta^-\tilde{x}_s)=0.$$
Therefore, $I_3=I_4=I_7=0$. 
This implies $x=\tilde{x}$ and $k=\tilde{k}$. This completes the proof.
\end{proof}

We introduce the following mappings
$\Theta,\beta:\mathcal{R}(\mathbb{R}^{+},\mathbb{R})\times\mathcal{R}(\mathbb{R}^{+},\mathbb{R})\times\mathcal{R}(\mathbb{R}^{+},\mathbb{R})\to\mathcal{R}(\mathbb{R}^{+},\mathbb{R})$ and
$\alpha:\mathcal{R}(\mathbb{R}^{+},\mathbb{R})\times\mathcal{R}(\mathbb{R}^{+},\mathbb{R})\to\mathcal{R}(\mathbb{R}^{+},\mathbb{R})$ defined by:
\begin{equation*}
\Theta(y,l,u)_t:=\Theta^{u}_{l}(y)_t=\sup_{s\leq t}\Bigg( \big((y_s-u_s)\lor (y_{s^{+}}-u_{s^+})\big)^+\land \inf_{s\leq r\leq t}\Big[\Big((y_r-l_r)\land (y_{r^{+}}-l_{r^+})\Big)\lor(y_r-u_r)\Big]\Bigg),
\end{equation*}
\begin{equation*}
\beta(y,l,u)_t:=\beta^{u}_{l}(y)_t=\sup_{s\leq t}\Bigg( \big((y_s-u_s)\lor (y_{s^{+}}-u_{s^+})\big)\land \inf_{s\leq r\leq t}\Big[\Big((y_r-l_r)\land (y_{r^{+}}-l_{r^+})\Big)\lor(y_r-u_r)\Big]\Bigg),\label{13}
\end{equation*}
and
\begin{equation*}
\alpha(y,l)_t:=\alpha^{l}(y)_t=\inf_{s\leq t}\big((y_s-l_s)\land (y_{s^{+}}-l_{s^{+}})\land 0\big).\label{12}
\end{equation*}	
The following Theorem  ensures the existence of the reflection problem $RP()$.
\begin{theorem}\label{th1}
 Let  $y$,$l$ and $u$ be in  $\mathcal{R}(\mathbb{R}^{+},\mathbb{R})$  such that $l\leq u$ and  $l_0\leq y_{0}\leq u_0$. Assume that $\inf\limits_{s\leq t}(u_s-l_s)>0$ for every $t\geq 0$. Then, the reflection problem $RP^u_l(y)$ has an explicit solution, which is given by:
$$k_t=-\Big(\alpha^{l}(y)_t\lor \beta^{u}_l(y)_t\Big),\; x_t=y_t+k_t,\;\;\text{for every } t\geq 0.$$

\end{theorem}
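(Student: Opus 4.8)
The plan is to verify directly that the pair $(x,k)$ given by the explicit formulas meets each requirement (i)--(v) of Definition \ref{def1}, after producing a decomposition $k=\phi^1-\phi^2$ into non-decreasing parts. I would first dispose of the easy points. Since $y,l,u\in\mathcal{R}(\mathbb{R}^{+},\mathbb{R})$, the running suprema and infima entering $\alpha^{l}(y)$ and $\beta^{u}_{l}(y)$ again define regulated functions (the stability property already used in \cite{hilbert}), so $k,x\in\mathcal{R}(\mathbb{R}^{+},\mathbb{R})$, and one notes that $-\alpha^{l}(y)_t=\sup_{s\le t}\big((l_s-y_s)\lor(l_{s^+}-y_{s^+})\lor 0\big)$ is non-decreasing. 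For the sandwiching (ii) I would simply evaluate the suprema and infima defining $\alpha^{l}(y)_t$ and $\beta^{u}_{l}(y)_t$ at the endpoint $s=t$ (and $r=t$ in the inner infimum): this gives at once $-\alpha^{l}(y)_t\ge l_t-y_t$, $-\beta^{u}_{l}(y)_t\ge l_t-y_t$ and $\beta^{u}_{l}(y)_t\ge y_t-u_t$, hence $l_t-y_t\le k_t\le u_t-y_t$; together with $l_0\le y_0\le u_0$ and the convention that the data do not jump at $0$, this yields $l\le x\le u$.

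The decomposition is where the hypothesis $\inf_{s\le t}(u_s-l_s)>0$ is used decisively: it is precisely what keeps $k$ from oscillating infinitely on compacts (besides forbidding simultaneous contact with both barriers, cf. Lemma \ref{lem1}(i)). I would show that $k$ has locally bounded variation; the cleanest route is to relate $x=y-(\alpha^{l}(y)\lor\beta^{u}_{l}(y))$ to the one-sided Skorokhod maps of Propositions \ref{r1} and \ref{r2} --- reflecting $y$ upward off $l$ and the result downward off $u$, along the lines announced in the introduction --- which exhibits $k$ as a lattice combination of monotone functions, hence of locally bounded variation. One then takes $\phi^1$ and $\phi^2$ to be, respectively, the positive and the negative variation of $k$ on $[0,t]$, so that (i) and (iii) hold by construction.

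What remains, and is the main obstacle, is the complementarity conditions (iv) and (v). Via the reformulations in the Remark following Definition \ref{def1} (the slight extension of \cite{kela}), (iv) reduces to the pointwise statements that $(x_t-l_t)\land(x_{t^+}-l_{t^+})=0$ at every continuity-and-strict-left-increase time of $\phi^1$, and its analogue with $u$ and $\phi^2$; these I would extract from the structure of the formula, noting that on any stretch where $k$ increases one has $k=-\alpha^{l}(y)$ with the running infimum defining $\alpha^{l}$ still attained up to $t$ (which forces contact with $l$), while on any stretch where $k$ decreases one has $k=-\beta^{u}_{l}(y)$ with the nested $\sup$--$\inf$ attained up to $t$ (which forces contact with $u$); since by the separation hypothesis the two contact sets are disjoint, this also identifies the positive/negative variation split with the genuine push-up/push-down split. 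Condition (v) I would get from the case analysis already rehearsed in the motivating example preceding Definition \ref{def1}: at a time $t$ with $\Delta^+k_t\ne 0$ the formula shows which of $-\alpha^{l}(y)$, $-\beta^{u}_{l}(y)$ realises the jump, and in each case $x_t$ and $x_{t^+}$ land on the barriers prescribed by the two displayed alternatives of the Remark. Throughout, the genuinely delicate recurring point --- and the reason the làdlàg case is harder than the càdlàg one --- is having to track the two one-sided limits $x_t\land x_{t^+}$ and $x_t\lor x_{t^+}$ simultaneously; routing everything through the one-sided maps of Propositions \ref{r1}--\ref{r2}, whose complementarity is already established in \cite{hilbert}, is what keeps this step under control.
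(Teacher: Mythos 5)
Your verification of condition (ii) (evaluating the outer supremum/infimum in $\alpha^{l}(y)_t$ and $\beta^{u}_{l}(y)_t$ at $s=t$, $r=t$) is exactly the paper's argument and is fine. The two places where the proposal has genuine gaps are precisely the two places where the paper does real work. First, the bounded-variation step: you assert that reflecting $y$ upward off $l$ and then the result downward off $u$ ``exhibits $k$ as a lattice combination of monotone functions.'' That composition is not the two-barrier map: $\Gamma^u\circ\Gamma_l\neq\Gamma^u_l$ in general, because after a downward reflection off $u$ the path can again violate $l$. The correct factorization, which the paper proves as a corollary \emph{after} the theorem, is $\Gamma^u_l=\Lambda\circ\Gamma_l$ with $\Lambda(f)=f-\Theta^u_l(f)$, and $\Theta^u_l$ is the nested $\sup$--$\inf$ operator, not the one-sided upper map of Proposition \ref{r2}. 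Moreover $\Theta^u_l(\xi)$ is not a finite lattice combination of monotone functions, so its bounded variation does not come for free: the paper's Lemma \ref{lemma1} establishes it by constructing the alternating crossing times $s_n,t_n$, showing via the hypothesis $\inf_{s\le t}(u_s-l_s)>0$ that they are strictly interlaced and tend to $+\infty$, and identifying $\Theta^u_l(\xi)$ as a running supremum on each $[t_i,s_{i+1}[$ and a running infimum on each $[s_i,t_i[$ (formula \eqref{8}). Nothing in your sketch substitutes for this.

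Second, the complementarity conditions (iv)--(v). Your key claim that ``on any stretch where $k$ increases one has $k=-\alpha^{l}(y)$'' is false: once the upper barrier has acted, $k=-\alpha^{l}(y)-\Theta^u_l(\xi)$ carries a nonzero accumulated correction $\Theta^u_l(\xi)$ even on intervals where $k$ is increasing, so contact with $l$ cannot be read off from the attainment of the infimum defining $\alpha^{l}(y)$ alone. The paper's verification goes through the additive identity $\alpha^{l}(y)\lor\beta^{u}_{l}(y)=\alpha^{l}(y)+\Theta^u_l(\xi)$ (Lemma \ref{prop3}, itself a nontrivial computation involving the last contact time $\tau_r$ with the lower barrier) and then a case analysis on the piecewise representation \eqref{8}, computing $m^i_t$ and $m^i_{t^+}$ at jump times and at points of increase of $\phi^1$ to force $x_t=u_t$, $x_{t^+}=l_{t^+}$ when $\Delta^+k_t>0$, and $x_t=l_t$ or $x_{t^+}=l_{t^+}$ at points charged by $\phi^{1,r}$. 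Your proposal correctly identifies that this is ``the main obstacle'' and correctly names the reformulation via \cite{kela}, but it does not supply the argument; as written, both the bounded-variation claim and the contact claims rest on an incorrect description of how the two one-sided maps combine.
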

The proof of Theorem \ref{th1} is divided into a sequence of Lemmas.
\begin{lemma}\label{lemma1}
 Let  $y$, $l$, and $u$ be in  $\mathcal{R}(\mathbb{R}^{+},\mathbb{R})$  such that $l\leq u$, $l_0\leq y_{0}\leq u_0$, and $\inf\limits_{s\leq t}(u_s-l_s)>0$ for every $t\geq 0$. Let $(\xi,\kappa)=RP_l(y)$, then
$\Theta^u_l(\xi)$  has a bounded variation. 

Here $RP_l(y)$ denotes the reflection problem with one lower barrier (see Proposition \ref{r1}).
\end{lemma}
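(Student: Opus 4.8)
The plan is to fix an arbitrary $T>0$ and show that $\Theta^{u}_{l}(\xi)$ has finite total variation on $[0,T]$; since $T$ is arbitrary, this gives the assertion. Write $\Theta_t:=\Theta^{u}_{l}(\xi)_t=\sup_{s\le t}\big(g_s\wedge\inf_{s\le r\le t}c_r\big)$ with $g_s:=\big((\xi_s-u_s)\vee(\xi_{s^{+}}-u_{s^{+}})\big)^{+}$ and $c_r:=\big((\xi_r-l_r)\wedge(\xi_{r^{+}}-l_{r^{+}})\big)\vee(\xi_r-u_r)$. First I would record the facts to be used: by Proposition \ref{r1}, $\xi$ is regulated and $\xi\ge l$, so $\xi_{r^{+}}\ge l_{r^{+}}$ and hence $c_r\ge0$ for all $r$, which gives $0\le\Theta_t$; moreover $\xi,u,l$ and the derived regulated functions $g,c$ are bounded on $[0,T]$, say by $R<\infty$, so $0\le\Theta_t\le R$ there; and $\delta:=\inf_{s\le t}(u_s-l_s)>0$ on $[0,T]$. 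In general $\Theta$ is not monotone (already for continuous $\xi$ one can have $\Theta$ increase and then decrease), so finiteness of its variation is genuinely at issue; this is the regulated counterpart of the bounded variation of the reflection term in the explicit double Skorokhod map.

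The core is to bound the number of oscillations of $\Theta$, and the two observations that do this --- while also neutralising the non-locality of $\sup_{s\le t}$ --- are the following. (i) If $\Theta_p<\Theta_q$ with $p<q$, then for $\sigma\le p$ one has $g_\sigma\wedge\inf_{\sigma\le r\le q}c_r\le g_\sigma\wedge\inf_{\sigma\le r\le p}c_r\le\Theta_p$, so the supremum defining $\Theta_q$ is approached only by indices $\sigma\in(p,q]$; hence for every $\eta>0$ there is $\sigma\in(p,q]$ with $g_\sigma>\Theta_q-\eta$, i.e. $\xi_\sigma-l_\sigma>\delta+\Theta_q-\eta$ or $\xi_{\sigma^{+}}-l_{\sigma^{+}}>\delta+\Theta_q-\eta$. (ii) For $q<q'$ and $\sigma\le q$ one has $\inf_{\sigma\le r\le q'}c_r=\big(\inf_{\sigma\le r\le q}c_r\big)\wedge\big(\inf_{q\le r\le q'}c_r\big)$; applying $g_\sigma\wedge(\cdot)$, then $\sup_{\sigma\le q}$, and using that $\wedge$ distributes over $\sup$ gives $\Theta_{q'}\ge\Theta_q\wedge\inf_{q\le r\le q'}c_r$; so if $\Theta_{q'}<\Theta_q$ then $\inf_{q\le r\le q'}c_r\le\Theta_{q'}$, whence for every $\eta>0$ there is $r\in[q,q']$ with $c_r<\Theta_{q'}+\eta$, i.e. $\xi_r-l_r<\Theta_{q'}+\eta$ or $\xi_{r^{+}}-l_{r^{+}}<\Theta_{q'}+\eta$.

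Combining (i) and (ii): over the time span of any up-then-down excursion of $\Theta$ (up to a level $b$, then down to a level $a'<b$), the function $r\mapsto\xi_r-l_r$ or the function $r\mapsto\xi_{r^{+}}-l_{r^{+}}$ exceeds $\delta+b-\eta$ and also goes below $a'+\eta$ on that span, for every $\eta>0$; so one of them oscillates there by more than $\delta+(b-a')>\delta$, and symmetrically for down-then-up excursions. Since $\xi-l$ and $r\mapsto\xi_{r^{+}}-l_{r^{+}}$ are regulated, only finitely many pairwise disjoint subintervals of $[0,T]$ can carry an oscillation of amplitude $\ge\delta/2$ of either of them (otherwise such subintervals would accumulate at a point having no jump of size $\ge\delta/2$ --- there being only finitely many such jump times --- contradicting the one-sided limits there). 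Hence $\Theta$ changes monotonicity only finitely often on $[0,T]$: there is a finite subdivision $0=\tau_0<\dots<\tau_m=T$ with $\Theta$ monotone on each $[\tau_{i-1},\tau_i]$, so $\operatorname{Var}_{[0,T]}(\Theta)\le\sum_{i=1}^{m}\big(\sup_{[\tau_{i-1},\tau_i]}\Theta-\inf_{[\tau_{i-1},\tau_i]}\Theta\big)\le 2mR<\infty$.

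The step I expect to require the most care is the làdlàg bookkeeping behind (i)--(ii) and behind the passage ``oscillation of $\Theta$ $\Rightarrow$ oscillation of $\xi$ relative to the barriers'': at every jump time one has to keep the value distinct from both one-sided limits of $\xi$, $u$, $l$ (and of $\Theta$), and the suprema and infima need not be attained, which is why the estimates are written with an auxiliary $\eta>0$. This is also precisely where the hypothesis $\inf_{s\le t}(u_s-l_s)>0$ is indispensable: it forces each monotonicity change of $\Theta$ to cost a genuine $\delta$-oscillation of $\xi$ against the barriers, and thereby prevents the monotone pieces of $\Theta$ from accumulating --- without it $\Theta^{u}_{l}(\xi)$ need not be of bounded variation.
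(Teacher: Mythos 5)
Your argument is correct in substance, but it takes a genuinely different route from the paper's. The paper proves Lemma \ref{lemma1} constructively: it introduces the alternating times $s_n,t_n$ at which $\xi$ crosses from the lower to the upper barrier and back, shows that on $[t_{n-1},s_n[$ the map $\Theta^u_l(\xi)$ reduces to a running supremum and on $[s_n,t_n[$ to a running infimum (hence is monotone on each piece), and then proves in Steps 2--3 that these times are strictly increasing and tend to $+\infty$ --- which is where the hypothesis $\inf_{s\le t}(u_s-l_s)>0$ enters, exactly as in your argument, to forbid accumulation. Your proof replaces this construction by a soft counting argument: the two inequalities (i) and (ii) (both of which are correct --- (i) localises the supremum to $(p,q]$, and (ii) is the lattice identity $\Theta_{q'}\ge\Theta_q\wedge\inf_{q\le r\le q'}c_r$) show that every alternation of $\Theta$ forces an oscillation of amplitude at least $\delta$ of $\xi-l$ against the barriers, and a regulated function admits only finitely many disjoint $\delta/2$-oscillations on a compact interval. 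Two remarks on what each approach buys. First, your final step is phrased as ``$\Theta$ changes monotonicity only finitely often, hence admits a finite subdivision into monotone pieces''; the cleaner logical form of the same idea is the contrapositive --- infinite variation of a bounded function yields, via maximal same-sign runs of a partition with large variation sum, arbitrarily many disjoint up-down alternations, each costing a $\delta/2$-oscillation, which is the contradiction --- and this bypasses the (slightly delicate) claim of piecewise monotonicity while giving the quantitative bound $\operatorname{Var}_{[0,T]}\Theta\le 2R\,(N_{\delta/2}+1)$ with $N_{\delta/2}$ the maximal number of disjoint $\delta/2$-oscillations of $\xi-l$. Second, and more importantly for the paper's purposes: the explicit piecewise formula \eqref{8} produced by the paper's construction is not a by-product but an input to the proof of Theorem \ref{th1}, where the minimality conditions (iv)--(v) of Definition \ref{def1} are verified interval by interval using the $m^i$ and $n^i$; your argument establishes the bounded-variation claim more economically but does not produce that decomposition, so it could not simply be substituted without reworking the existence proof.
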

\begin{proof}

We consider the following sequence of times:
$$s_0=0,\;\;t_0=\inf\{t\geq 0:  (\xi_t-u_t) \geq 0 \}$$ 
for $n\geq 1$,
$$ s_n= \inf\{t\geq t_{n-1}:  (\xi_t-l_t)\leq \sup_{t_{n-1}\leq s\leq t}(\xi_s-u_s) \},$$
$$t_n= \inf\{t\geq s_{n}:  \inf_{s_{n}\leq s\leq t}(\xi_s-l_s)\leq (\xi_t-u_t) \},$$

with $\inf(\emptyset)=+\infty.$ For $s\geq 0$, we set:
 $$g_s=\big((\xi_{s}-u_s)\lor(\xi_{s^{+}}-u_{s^+})\big)^+\;\text{ and }\; h_s=\Big(\big((\xi_s-l_s)\land(\xi_{s^{+}}-l_{s^{+}})\big)\lor(\xi_s-u_s)\Big).$$
The proof is divided into 3 steps
\begin{itemize}
\item[\textbf{Step 1:}]  We show that  the following  statements hold: 
\begin{itemize}
\item[(i)] $\text{For all } \;\;t\in[s_0,t_0[,\;\;\Theta^u_l(\xi)_t=0 $

\item[(ii)]$\text{For every}\;\;t\in [t_{n-1},s_n[,$ \begin{equation*}
 \;\;\;\Theta^u_l(\xi)_t=\sup_{t_{n-1}\leq s\leq t}(g_s\land(\xi_s-l_s)).
\end{equation*}
\item[(iii)]$\text{For all } t\in[s_n,t_{n}[$ \begin{equation*}
\;\;\Theta^u_l(\xi)_t= \inf_{s_n\leq s\leq t }h_s. 
\end{equation*}
\end{itemize}
\begin{itemize}
\item[(i)] Let us show that $\text{for all} \;t\in[s_0,t_0[,\;\;\Theta^u_l(\xi)_t=0$.
Let $s\in[0,t_0[$.
Observe that $\big((\xi_{s}-u_s)\lor(\xi_{s^{+}}-u_{s^{+}})\big)\leq 0 $,
then $g_s=0$. Since $ (\xi,\kappa)=RP_l(y)$, then $h_s\geq 0$. Therefore the statement  (i) holds.
\item[(ii)] Let $n\geq 1$, and let $t\in [t_{n-1},s_n[$, we have:
\begin{equation}
\begin{split}
\Theta^u_l(\xi)_t=\max\Big(\sup_{s< t_{n-1}}(g_s\land \inf_{s\leq r\leq t}h_r), \sup_{t_{n-1}\leq s\leq t}(g_s\land \inf_{s\leq r\leq t}h_r)\Big).\label{19}
\end{split}
\end{equation} 
Let $s\in [t_{n-1},t]$ and  $r\in]s,t]$, then
\begin{equation}
(\xi_s-u_s)\leq \sup_{t_{n-1}\leq v\leq r}(\xi_v-u_v)\leq \xi_r-l_r, \label{14}
\end{equation}
 since $r<s_n$, there exists $i_{n}$ such that for every $i\geq i_{n}$, $r+\frac{1}{i}<s_n$, then for every $i\geq i_{n}$
$$ (\xi_s-u_s)\leq \sup_{t_{n-1}\leq v\leq r+\frac{1}{i}}(\xi_v-u_v)\leq \xi_{r+\frac{1}{i}}-l_{r+\frac{1}{i}}, $$
tending $i$ to infinity, we obtain
\begin{equation}
(\xi_s-u_s)\leq (\xi_{r^{+}}-l_{r^{+}}), \label{15}
\end{equation}
similarly, we obtain 
\begin{equation}
(\xi_{s^+}-u_{s^+})\leq (\xi_{r^{+}}-l_{r^{+}}). \label{16}
\end{equation}
Since $r>s$,
\begin{equation}
(\xi_{s^{+}}-u_{s^{+}})\leq\sup_{t_{n-1}\leq v<r}(\xi_{v^{+}}-u_{v^{+}})\leq  \sup_{t_{n-1}\leq v\leq r}(\xi_v-u_v)\leq (\xi_r-l_r). \label{17}
\end{equation}

From \eqref{14},\eqref{15},\eqref{16}, and \eqref{17}, we obtain $ g_s\leq h_r,$ therefore, 
\begin{equation}
g_s\land h_s\leq \inf_{s<r\leq t}h_r, \label{118}
\end{equation}
and then, we get
\begin{equation}
 \sup_{t_{n-1}\leq s\leq t}(g_s\land \inf_{s\leq r\leq t}h_r)=\sup_{t_{n-1}\leq s\leq t}(g_s\land h_s)= \sup_{t_{n-1}\leq s\leq t}(g_s\land(\xi_s-l_s)), \label{18}
\end{equation}
where in the last equality we have used the fact that $l_+\leq u_+$.

On the other hand, we have
\begin{equation}
\begin{split}
\sup_{s< t_{n-1}}(g_s\land \inf_{s\leq r\leq t}h_r)&=\max\Big(\sup_{s< s_{n-1}}(g_s\land \inf_{s\leq r\leq t}h_r), \sup_{s_{n-1}\leq s< t_{n-1}}(g_s\land \inf_{s\leq r\leq t}h_r)\Big).
\label{1}
\end{split}
\end{equation}
Let $s\in [s_{n-1},t_{n-1}[$, we have 
$$\big((\xi_s-u_s)\lor(\xi_{s^{+}}-u_{s^{+}})\big)\leq \inf_{s_{n-1}\leq r\leq s}(\xi_r-l_r)\leq \inf_{s_{n-1}\leq r<s}(\xi_r-l_r)=\inf_{s_{n-1}\leq r<s}\big((\xi_r-l_r)\land(\xi_{r^{+}}-l_{r^{+}})\big), $$
then for every $s \in[s_{n-1},t_{n-1}[$,
\begin{equation}
 g_s\leq \inf_{s_{n-1}\leq r<s}h_r .\label{2}
\end{equation}
Combining  \eqref{1} with \eqref{2}, we get
\begin{align}
\sup_{s< t_{n-1}}(g_s\land \inf_{s\leq r\leq t}h_r)&\leq\max\Big(\inf_{s_{n-1}\leq r\leq t_{n-1}}(h_r), \sup_{s_{n-1}\leq s< t_{n-1}}\big(\inf_{s_{n-1}\leq r<s}(h_r)\land \inf_{s\leq r\leq t}(h_r)\big)\Big)\nonumber\\
&= \inf_{s_{n-1}\leq r\leq t_{n-1}}h_r.\label{20}
\end{align}
From the expression of $h$, we have
$$\inf_{s_{n-1}\leq r\leq t_{n-1}}h_r\leq \inf_{s_{n-1}\leq r\leq t_{n-1}}(\xi_r-l_r), $$
and 
$$\inf_{s_{n-1}\leq r\leq t_{n-1}}h_r\leq (\xi_{t_{n-1}^+}-l_{t_{n-1}^+})\lor(\xi_{t_{n-1}}-u_{t_{n-1}})  $$
then
\begin{equation}
\inf_{s_{n-1}\leq r\leq t_{n-1}}h_r\leq \big(\inf_{s_{n-1}\leq r\leq t_{n-1}}(\xi_r-l_r)\big)\land\big( (\xi_{t_{n-1}^+}-l_{t_{n-1}^+})\lor(\xi_{t_{n-1}}-u_{t_{n-1}})\big).\label{33}
\end{equation}
By definition of $t_{n-1}$, we have
$$(\inf_{s_{n-1}\leq r\leq t_{n-1}}(\xi_r-l_r))\land(\xi_{t_{n-1}^{+}}-l_{t_{n-1}^{+}})\leq \big((\xi_{t_{n-1}}-u_{t_{n-1}})\lor(\xi_{t_{n-1}^{+}}-u_{t_{n-1}^{+}})\big),$$
then
$$\big(\inf_{s_{n-1}\leq r\leq t_{n-1}}(\xi_r-l_r)\big)\land\big( (\xi_{t_{n-1}^+}-l_{t_{n-1}^+})\lor(\xi_{t_{n-1}}-u_{t_{n-1}})\big)\leq \big((\xi_{t_{n-1}}-u_{t_{n-1}})\lor(\xi_{t_{n-1}^{+}}-u_{t_{n-1}^{+}})\big),$$
from the previous inequality and \eqref{33}, we get
$$\inf_{s_{n-1}\leq r\leq t_{n-1}}h_r\leq (\xi_{t_{n-1}}-u_{t_{n-1}})\lor(\xi_{t_{n-1}^{+}}-u_{t_{n-1}^{+}}),   $$
 since $\inf_{s_{n-1}\leq r\leq t_{n-1}}h_r\leq (\xi_{t_{n-1}}-l_{t_{n-1}}), $
we obtain 
$$\inf_{s_{n-1}\leq r\leq t_{n-1}}h_r\leq g_{t_{n-1}}\land(\xi_{t_{n-1}}-l_{t_{n-1}})\leq \sup_{t_{n-1}\leq s\leq t}(g_s\land(\xi_s-l_s)) .$$
From \eqref{19},\eqref{18}, \eqref{20} and the previous inequality,  we get (ii).
\item[(iii)]  Let $t\in[s_n,t_{n}[$, we have 
 \begin{equation*}
\begin{split}
\Theta^u_l(\xi)_t&=\max(\sup_{s\leq s_n}(g_s\land \inf_{s\leq r\leq t}h_r),\sup_{s_n< s\leq t}(g_s\land \inf_{s\leq r\leq t}h_r))\\
&=\max(\sup_{s\leq s_n}(g_s\land \inf_{s\leq r\leq s_n}h_r)\land\inf_{s_n\leq r\leq t}h_r,\sup_{s_n< s\leq t}(g_s\land \inf_{s\leq r\leq t}h_r)).\label{35}
\end{split}
 \end{equation*}
On one hand, we have
 \begin{equation*}
 \begin{split}
 \sup_{s\leq s_n}(g_s\land \inf_{s\leq r\leq s_n}h_r)&\geq \sup_{t_{n-1}\leq s\leq s_n}(g_s\land \inf_{s\leq r\leq  s_n}h_r)\\
 \end{split}
 \end{equation*}
 and 
 \begin{equation*}
\begin{split}
 \sup_{t_{n-1}\leq s\leq s_n}(g_s\land \inf_{s\leq r\leq  s_n}h_r)&=\max(\sup_{t_{n-1}\leq s< s_n}(g_s\land \inf_{s\leq r<  s_n}h_r)\land h_{s_n},g_{s_n}\land h_{s_n})\\
 &=\max(\sup_{t_{n-1}\leq s< s_n}(g_s\land h_s)\land h_{s_n},g_{s_n}\land h_{s_n})\\
&=   \sup_{t_{n-1}\leq s\leq s_n}(g_s\land h_s)
\end{split}
 \end{equation*}
 where in the second equality, we have used \eqref{118}. Since $l\leq u$, we get
$$\sup_{t_{n-1}\leq s\leq s_n}(g_s\land h_s)\geq \sup_{t_{n-1}\leq s\leq s_n}(\xi_s-u_s), $$

and by the definition of $s_n$, we have 
$$ \sup_{t_{n-1}\leq s\leq s_n}(\xi_s-u_s)\geq h_{s_n}\geq \inf_{s_{n}\leq r\leq t}h_r, $$

therefore
\begin{equation}
\Theta^u_l(\xi)_t=\max(\inf_{s_n\leq r\leq t}h_r,\sup_{s_n< s\leq t}(g_s\land \inf_{s\leq r\leq t}h_r)).\label{21}
\end{equation}
On the other hand, it follows from  \eqref{2} that
\begin{equation*}
\begin{split}
\sup_{s_n< s\leq t}(g_s\land \inf_{s\leq r\leq t}h_r)&\leq\sup_{s_n< s\leq t}(\inf_{s_n\leq r< s}h_r\land \inf_{s\leq r\leq t}h_r) = \inf_{s_n\leq r\leq t }h_r.
\end{split}
\end{equation*}
From \eqref{21} and the previous inequality, we get (iii).
\end{itemize}
 \item[ \textbf{Step 2 }] Let $n\geq 1$. We show  that $s_n< s_{n+1}$ if $s_{n+1}<\infty$  (respectively $t_{n}< t_{n+1}$ if  $t_{n+1}<\infty$) . 
 
 Let $n\geq 1$ such that $s_{n+1}<\infty$. Assume that $s_{n+1}=s_{n}$. Since $s_n\leq t_{n}\leq s_{n+1}$, $t_{n}=s_n.$
 
  By the definition of $t_{n}$ and since $s_n=t_{n}$, we obtain
$$\xi_{s_{n}}-l_{s_{n}}\leq \xi_{s_{n}}-u_{s_{n}}\;
\text{ or }\;
(\xi_{s_{n}}-l_{s_{n}})\land(\xi_{s_{n}^{+}}-l_{s_{n}^{+}})\leq \xi_{s_{n}^{+}}-u_{s_{n}^{+}},$$
since $l<u $ and $l_+<u_+$, we get
\begin{equation}
\xi_{s_{n}}-l_{s_{n}}\leq \xi_{s_{n}^{+}}-u_{s_{n}^{+}}. \label{3}
\end{equation}
On the other hand, by the definition of $s_{n+1}$ and the fact that $s_n=s_{n+1}=t_{n}$, we get
$$\xi_{s_{n}}-l_{s_{n}}\leq \xi_{s_{n}}-u_{s_{n}}\;\text{ or }\;\xi_{s_{n}^{+}}-l_{s_{n}^{+}}\leq (\xi_{s_{n}}-u_{s_{n}})\lor(\xi_{s_{n}^{+}}-u_{s_{n}^{+}}) $$
again, from the fact that $l<u$ and $l_+<u_+$,  we obtain
\begin{equation}
\xi_{s_{n}^{+}}-l_{s_{n}^{+}}\leq (\xi_{s_{n}}-u_{s_{n}}). \label{4}
\end{equation}
From \eqref{3} and \eqref{4}, we get
$$u_{s_n^+}-l_{s_n}\leq \Delta^{+}\xi_{s_n}\leq l_{s_n^+} -u_{s_n}$$
which implies that
$$0\leq u_{s_n^+}-l_{s_n^+}\leq l_{s_n}-u_{s_n}\leq 0 . $$
thus,
$$ u_{s_n^+}=l_{s_n^+}\;\;\text{and}\;\; l_{s_n}=u_{s_n}.$$
This contradicts the assumption $\inf\limits_{s\leq t}(u_s-l_s)>0$ for every $t\geq 0$.
Therefore, $s_n<s_{n+1}.$
Similarly, we show that $t_n<t_{n+1}$ if $t_{n+1}<\infty$.
\item[\textbf{Step 3}] We show that $\lim\limits_{n\to+\infty} s_n=+\infty.$

Assume that $\lim\limits s_{n}=s<\infty$. Let $n\geq 1$, we have  by the definition of $s_n$

$$(\xi_{s_n}-l_{s_n})\land (\xi_{s_{n}^+}-l_{s_{n}^+})\leq \sup_{t_{n-1}\leq s\leq s_{n}}(\xi_{s}-u_{s})\lor (\xi_{s_{n}^+}-u_{s_{n}^+}) $$
 tending $n$ to infinity in the inequality above, it follows  from  step 2 that 
$$\xi_{l^-}-l_{l^-}\leq \xi_{l^-}-u_{l^-}, $$
which contradict the fact that $l_-<u_-$.
  \end{itemize}
 From the three previous steps, we conclude that
 \begin{multline}
 \Theta^u_l(\xi)_t=\sum_{i\geq 0}\sup_{t_{i}\leq s\leq t}\Big(\big((\xi_{s}-u_s)\lor(\xi_{s^{+}}-u_{s^{+}})\big)^+\land(\xi_s-l_s)\Big) 1_{[t_{i},s_{i+1}[}(t)\\+\sum_{i\geq 1}\inf_{s_i\leq s\leq t }\Big(\big((\xi_s-l_s)\land(\xi_{s^{+}}-l_{s^{+}})\big)\lor(\xi_s-u_s)\Big) 1_{[s_{i},t_i[}(t). \label{8}
 \end{multline}
 This shows that $\Theta^u_l(\xi)$ has a bounded variation.
\end{proof}
\begin{lemma}\label{prop3}
 Let  $y$, $l$ and $u$ be in  $\mathcal{R}(\mathbb{R}^{+},\mathbb{R})$  such that $l\leq u$ and $l_0\leq y_{0}\leq u_0$. Let $(\xi,\kappa)$ be the solution of $RP_l(y)$, then for all $t\geq 0$ we have,
 \begin{equation*}
\big(\alpha^{l}(y)_t\lor\beta^{u}_l(y)_t\big)=\alpha^{l}(y)_t + \Theta^u_l(\xi)_t. \label{29}
 \end{equation*}
\end{lemma}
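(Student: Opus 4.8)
The plan is to reduce the claimed identity to a statement about the one‑barrier reflection, and then to exploit the piecewise description of $\Theta^u_l(\xi)$ obtained in Lemma \ref{lemma1}. Write $a:=\alpha^l(y)$; by Proposition \ref{r1} the solution of $RP_l(y)$ is $\kappa=-a$ and $\xi=y-a$, so $a$ is nonpositive and nonincreasing and $\xi\ge l$. Since the asserted equality is, for each fixed $t$, an identity between two real numbers, it is equivalent to $a_t\lor\beta^u_l(y)_t=a_t+\Theta^u_l(\xi)_t$, equivalently $(\beta^u_l(y)_t-a_t)^+=\Theta^u_l(\xi)_t$, and I would prove it in this form, separately for each $t$.

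First I would simplify the two sides. Set $h^{(y)}_r:=\big((y_r-l_r)\land(y_{r^+}-l_{r^+})\big)\lor(y_r-u_r)$, $\mu_s:=(y_s-u_s)\lor(y_{s^+}-u_{s^+})$, and let $h^{(\xi)}_r$, $g_s:=\big((\xi_s-u_s)\lor(\xi_{s^+}-u_{s^+})\big)^+$ be the corresponding quantities built from $\xi$. From the definition of $a$ one has $h^{(y)}_r\ge (y_r-l_r)\land(y_{r^+}-l_{r^+})\ge a_r\ge a_t$ for every $r\le t$, hence $\inf_{s\le r\le t}h^{(y)}_r\ge a_t$; using distributivity of $\lor,\land$ this gives $a_t\lor\beta^u_l(y)_t=\sup_{s\le t}\big((\mu_s\lor a_t)\land\inf_{s\le r\le t}h^{(y)}_r\big)$. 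On the other side, adding the constant $a_t$ inside the supremum and the infimum yields $a_t+\Theta^u_l(\xi)_t=\sup_{s\le t}\big((g_s+a_t)\land\inf_{s\le r\le t}(h^{(\xi)}_r+a_t)\big)$. It therefore suffices to match these two suprema.

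Next I would fix $t$ and invoke the sequence $0=s_0\le t_0\le s_1\le t_1\le\cdots\to\infty$ and the explicit formula \eqref{8} of Lemma \ref{lemma1}, splitting according to whether $t$ lies in an interval $[t_i,s_{i+1}[$ or $[s_i,t_i[$ (the interval $[0,t_0[$ being the case $i=0$ of the first type, on which both sides vanish). On $[t_i,s_{i+1}[$ one has $\xi_s-l_s>\sup_{t_i\le v\le s}(\xi_v-u_v)\ge 0$, so $\xi$ stays strictly above $l$ there; since the one‑barrier regulator $\kappa$ increases only where $(\xi_s-l_s)\land(\xi_{s^+}-l_{s^+})=0$, this pins down $a$ to be (locally) constant, equal to $a_{t_i}$, on that interval, so that $\xi_r-u_r=(y_r-u_r)-a_{t_i}$, $\xi_r-l_r=(y_r-l_r)-a_{t_i}$, etc.\ there, which gives the pointwise identities $h^{(\xi)}_r+a_{t_i}=h^{(y)}_r$ and $g_s+a_{t_i}=\mu_s\lor a_{t_i}$. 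Combining these with \eqref{8}, and with equation \eqref{18} of Lemma \ref{lemma1} which replaces $\inf_{s\le r\le t}h^{(\xi)}_r$ by $\xi_s-l_s$ in the supremum over $s\in[t_i,t]$, reduces the case $t\in[t_i,s_{i+1}[$ to the localization claim that $\sup_{s\le t}\big((\mu_s\lor a_t)\land\inf_{s\le r\le t}h^{(y)}_r\big)$ is already attained over $s\in[t_i,t]$; this I would derive from $\inf_{s\le r\le t}h^{(y)}_r\le\inf_{t_i\le r\le t}h^{(y)}_r$ for $s<t_i$ together with the defining relations of $s_i,t_i$. For $t\in[s_i,t_i[$, \eqref{8} gives $\Theta^u_l(\xi)_t=\inf_{s_i\le s\le t}h^{(\xi)}_s$, and I would distinguish whether this infimum is zero — then $\xi$ approaches $l$ somewhere on $[s_i,t]$ and I would verify directly that $\beta^u_l(y)_t\le a_t$, again via $h^{(y)}\ge a$ and the behaviour of $\mu$ near the lower barrier — or strictly positive — then $\xi>l$ on $[s_i,t]$, $a$ is again locally constant, and the algebra above applies.

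The hard part will be the interval $[s_i,t_i[$, where $\Theta^u_l(\xi)$ appears as an infimum and the one‑barrier regulator $a$ need not be locally constant (and the analogous degenerate sub‑cases of $[t_i,s_{i+1}[$, where $\xi$ touches $u$ exactly at $t_i$), together with the localization step inside the $[t_i,s_{i+1}[$ case; both rely on careful use of the Skorokhod integral and jump relations satisfied by the one‑barrier pair $(\xi,\kappa)$ and of the properties of the times $s_i,t_i$ established in Steps 2 and 3 in the proof of Lemma \ref{lemma1} (strict monotonicity and divergence to $+\infty$).
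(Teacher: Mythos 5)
Your opening reduction is exactly the one the paper uses: writing $\alpha^{l}(y)_t\lor\beta^{u}_l(y)_t=\alpha^{l}(y)_t+\big(\beta^{u}_l(y)_t-\alpha^{l}(y)_t\big)^{+}$ and aiming at $\big(\beta^{u}_l(y)_t-\alpha^{l}(y)_t\big)^{+}=\Theta^{u}_l(\xi)_t$, and your algebraic rewriting of both sides as suprema (via $h^{(y)}_r\geq\alpha^{l}(y)_r\geq\alpha^{l}(y)_t$ and distributivity of $\lor$ over $\land$) is sound. From there on, however, the proposal is a plan rather than a proof: every step that actually carries the content of the lemma --- the localization of the supremum to $s\in[t_i,t]$, the behaviour on $[s_i,t_i[$ where $\alpha^{l}(y)$ may genuinely decrease, and the degenerate sub-cases --- is announced as ``the hard part'' and not carried out. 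Moreover one of your stated dichotomies is flawed: on $[s_i,t_i[$, the condition $\inf_{s_i\le s\le t}h^{(\xi)}_s>0$ does \emph{not} force $\xi>l$ on $[s_i,t]$, because $h^{(\xi)}_s$ can be positive through the term $\xi_s-u_s$ while $(\xi_s-l_s)\land(\xi_{s^{+}}-l_{s^{+}})=0$ (namely $\xi_{s^{+}}=l_{s^{+}}$ with $\xi_s>u_s$). A short argument shows this can only occur at $s=t$, and it is precisely the exceptional configuration in which both sides of the identity equal $(\xi_t-u_t)^{+}$ rather than $0$; it must be treated, not folded into the ``locally constant $\alpha$'' case. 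Finally, routing the proof through Lemma \ref{lemma1} silently imports the hypothesis $\inf_{s\le t}(u_s-l_s)>0$, which is needed for the times $s_i,t_i$ to be strictly increasing and divergent but is absent from the statement of Lemma \ref{prop3}.

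For comparison, the paper's proof does not use the interval decomposition \eqref{8} at all. After substituting $y=\xi+\alpha^{l}(y)$ into $\beta^{u}_l(y)_t-\alpha^{l}(y)_t$, it fixes $s$ and considers the last time $\tau\in[s,t]$ at which $(\xi-l)\land(\xi_{+}-l_{+})$ vanishes. Right-continuity of the one-barrier regulator (Proposition \ref{r1}) together with its support condition shows $\alpha^{l}(y)$ is constant on $[\tau,t]$; if no such $\tau$ exists the shifts $\alpha^{l}(y)_r-\alpha^{l}(y)_t$ cancel and the corresponding terms of the two suprema coincide, while if it exists both terms collapse to $0$ except in the single case $\tau=t$, $\xi_{t^{+}}=l_{t^{+}}$, $\xi_t>l_t$, where both suprema equal $(\xi_t-u_t)^{+}$. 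If you want to complete your argument, the cleanest repair is to abandon the $s_i,t_i$ machinery and argue per $s$ around this last contact time, which is exactly where all of your deferred difficulties concentrate.
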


\begin{proof}
Let $t\geq 0$, we have
\begin{equation}
\begin{split}
\Big(\alpha^{l}(y)_t\lor\beta^{u}_l(y)_t\Big)=\alpha^l(y)_t+\big(\beta^{u}_l(y)_t-\alpha^l(y)_t\big)^+.\label{5}\\
\end{split}
\end{equation}
We set $\gamma:=\beta^{u}_l(y)-\alpha^l(y)$. For simplicity, we write $\beta$ and $\alpha$ instead of $\beta^{u}_l(y)$ and $\alpha^l(y)$, respectively.  We show that $\gamma^+=\Theta^u_l(\xi)$. We have
\begin{multline*}
\gamma_t=\sup_{s\leq t}\Bigg( \big((y_s-\alpha_t-u_s)\lor (y_{s^{+}}-\alpha_t-u_{s^{+}})\big)\land \inf_{s\leq r\leq t}\Big(\big((y_r-\alpha_t-l_r)\land (y_{r^{+}}-\alpha_t-l_{r^{+}})\big)\lor(y_r-\alpha_t-u_r)\Big)\Bigg).\\
\end{multline*}
From Proposition \ref{r1}, we have
 $\xi=y-\alpha$, then $\gamma$ can be rewritten as
\begin{equation*}
\gamma_t=\sup_{s\leq t}( \gamma^1_s\land \inf_{s\leq r\leq t}\gamma^2_r),\label{gam}
\end{equation*}
where $$\gamma^1_s= \big((\xi_s+\alpha_s-\alpha_t-u_s)\lor (\xi_{s^+}+\alpha_s-\alpha_t-u_{s^+})\big)$$ and $$\gamma^2_s=\Big(\big((\xi_s+\alpha_s-\alpha_t-l_s)\land (\xi_{s^+}+\alpha_s-\alpha_t-l_{s^+})\big)\lor(\xi_s+\alpha_s-\alpha_t-u_s)\Big).$$
Let $s\leq t$ and $r\in[s,t]$. If for every $r\in [s,t]$, $(\xi_r-l_r)\land(\xi_{r^+}-l_{r^+})>0$, then for every $r\in[s,t]$  $\alpha_s
=\alpha_t=\alpha_r$ . Therefore,
\begin{equation*}
\gamma_t=\sup_{s\leq t}\Bigg( \big((\xi_s-u_s)\lor (\xi_{s^{+}}-u_{s^{+}})\big)\land \inf_{s\leq r\leq t}\Big(\big((\xi_r-l_r)\land (\xi_{r^{+}}-l_{r^{+}})\big)\lor(\xi_r-u_r)\Big)\Bigg).
\end{equation*}
 Since $\xi\geq l$, then
 \begin{equation*}
\gamma_t^+= \sup_{s\leq t}\Big( \big((\xi_s-u_s)\lor (\xi_{s^{+}}-u_{s^{+}})\big)^+\land \inf_{s\leq r\leq t}\Big(\big((\xi_r-l_r)\land (\xi_{r^{+}}-l_{r^{+}})\big)\lor(\xi_r-u_r)\Big)\Bigg)=\Theta^u_l(\xi)_t.
 \end{equation*}
 If  there exists  $r$ in $[s,t]$ such that $(\xi_r-l_r)\land(\xi_{r^+}-l_{r^+})=0$.
 We consider $\tau_r=\sup\{v\in[s,t],(\xi_v-l_v)\land(\xi_{v^{+}}-l_{v^{+}})=0 \}$. We distinguish two cases:
 \begin{itemize}
\item[ 	\textbf{Case 1}:] $\tau_r=t $, $
 \xi_{\tau_r^+}=l_{\tau_r^+}$ and $\xi_{\tau_r}>l_{\tau_r}$.
 By the definition of $\gamma$ and $\Theta^u_l(\xi)$, we have
 \begin{equation*}
 \begin{split}
\Theta^u_l(\xi)_t\lor\gamma_t\leq \Big(\big((\xi_t-l_t)\land (\xi_{t^+}-l_{t^+})\big)\lor(\xi_t-u_t)\Big),
 \end{split}
 \end{equation*}
Since $\xi_{t^+}=l_{t^+}$ and $\xi_{t}>l_{t}$, we obtain
 \begin{equation*}
\Theta^u_l(\xi)_t\lor\gamma_t^+\leq (\xi_t-u_t)^+ \label{6}
 \end{equation*}
From the expression  of $\gamma$ and $\Theta^u_l(\xi)$, we have $\gamma_t^+\geq (\xi_t-u_t)^+$ and $\Theta^u_l(\xi)\geq (\xi_t-u_t)^+$, combining this with the last inequality, we obtain $$\gamma_t^+=\Theta^u_l(\xi)= (\xi_t-u_t)^+.$$
\item[\textbf{Case 2:}]  $\tau_r<t $ or $
\xi_{\tau_r^+}>l_{\tau_r^+}$ or $\xi_{\tau_r}=l_{\tau_r}$.
 
 From proposition \ref{r1}, $\alpha$ is right continuous, then we have two cases to distinguish, either $(\xi_{\tau_r}-l_{\tau_r})\land(\xi_{\tau_r^+}-l_{\tau_r^+})=0$ and $\alpha $ is constant on $[\tau_r,t]$, or $\xi_{\tau_r^-}=l_{\tau_r^-}$, $\alpha_{\tau_r}=\alpha_{\tau_r^-}$ and $\alpha$ is constant on $[\tau_r,t]$.
 It follows, in  both cases, that $$\gamma_t^+=\Theta^u_l(\xi)_t=0.$$ 
  \end{itemize}
\end{proof}

\it{\textbf{Proof of Theorem \ref{th1}}.}
Set $k_t=-\Big(\alpha^{l}(y)_t\lor \beta^{u}_l(y)_t\Big)$ and $ x=y+k$, let us  show that $(x,k)$ is a solution of $RP_l^u(y)$.
	\begin{itemize}
\item[i)]Proof of statement (ii) of Definition \ref{def1}.

Let $t\geq 0$, on  one hand,  from the expression of $\alpha^l(y) $ and $\beta^u_l(y)$ we have, $$\alpha_t^l(y)\leq y_t-l_t,$$ and $$\beta^u_l(y)_t\leq \big((y_t-l_t)\land(y_{t^+}-l_{t^+})\big)\lor(y_{t}-u_t)\leq (y_t-l_t)\lor(y_{t}-u_t)=y_t-l_t,$$
therefore $$l_t\leq x_t= y_t+ k_t.$$

On the other hand,
$$\beta^u_l(y)_t\geq \big((y_t-u_t)\lor(y_{t^+}-u_{t^+})\big)\land\Big(\big((y_t-l_t)\land(y_{t^+}-u_{t^+})\big)\lor(y_{t}-u_t)\Big)\geq (y_t-u_t),$$
hence $$ y_t+k_t\leq u_t.$$
	
\item[ii)] Proof of statement (v) and (vi) of Definition \ref{def1}.

Let $(\xi,\kappa)=RP_l(y)$,  we use the same notations as in the proof of Lemma \ref{lemma1},we set
$$m_t^i=\inf_{s_i\leq s\leq t }\Big(\big((\xi_s-l_s)\land(\xi_{s^{+}}-l_{s^{+}})\big)\lor(\xi_s-u_s)\Big) , \;\text{for } i\geq 1$$
	and
$$n_t^i=\sup_{t_{i}\leq s\leq t}\Big(\big((\xi_{s}-u_s)\lor(\xi_{s^{+}}-u_{s^{+}})\big)^+\land(\xi_s-l_s)\Big) ,\; \;\text{for } i\geq 0.$$ 
It follows from Lemmas \ref{lemma1} and \ref{prop3} that $k$ has a bounded variation, then $k$ can be written as $k=k^r+\sum_{0\leq s<t}\Delta^+k_s$ and $k^r$ can be decomposed as $k^r=\phi^{1,r}-\phi^{2,r}$, where $\phi^{1,r}$ and $\phi^{2,r}$ are non-decreasing right continuous functions such that $\rm{d}\phi^{1,r}$ and $\rm{d}\phi^{2,r}$ have disjoint support. We set $\phi^{1,g}=\sum_{0\leq s<.}\Delta^+k_s1_{\Delta^+k_s>0}$ and $\phi^{2,g}=-\sum_{0\leq s<.}\Delta^+k_s1_{\Delta^+k_s<0}$, then $k:=\phi^1-\phi^2$ where $\phi^1=\phi^{1,r}+\phi^{1,g}$ and $\phi^2=\phi^{2,r}+\phi^{2,g}$

From \eqref{8}, $k$  also has the following  form:
$$k=-\alpha^l(y)-\sum_{i\geq 1}m^i1_{[s_{i},t_i[}-\sum_{i\geq 0}n^i1_{[t_{i},s_{i+1}[}.$$
Let $t\geq 0$ satisfying $\Delta^{+}k_t>0$, there exists $i$ such that $t\in [s_i,t_{i}[$ and $k_t=-\alpha^l(y)_t-m^i_t.$

Note that
\begin{equation*}
\begin{split}
m^i_{t^+}&=\lim\limits_{n\to+\infty}\inf_{s_i\leq s\leq t+\frac{1}{n} }\Big(\big((\xi_s-l_s)\land(\xi_{s^{+}}-l_{s^{+}})\big)\lor(\xi_s-u_s)\Big)\\
&=\lim\limits_{n\to+\infty} \min(m^i_{t}, \inf_{t< s\leq t+\frac{1}{n} }\Big(\big((\xi_s-l_s)\land(\xi_{s^{+}}-l_{s^{+}})\big)\lor(\xi_s-u_s)\Big) )\\
&=\min(m^i_{t},(\xi_{t^+}-l_{t^+})).
\end{split}
\end{equation*}
Since $k_{t^+}>k_t$ and $\alpha^l(y)$ is right continuous,
$m^i_{t^+}<m^i_{t}$,
hence
\begin{equation}
m^i_{t^+}= \xi_{t^{+}}-l_{t^{+}}, \label{40}
\end{equation}
then, we get 
\begin{equation*}
x_{t^+}=l_{t^+}.
\end{equation*}
It is easy to see that
\begin{equation*}
m^i_{t}=\min\Bigg(\inf_{s_i\leq s<t }\Big(\big((\xi_s-l_s)\land(\xi_{s^{+}}-l_{s^{+}})\big)\lor(\xi_s-u_s)\Big), \big((\xi_t-l_t)\land(\xi_{t^{+}}-l_{t^{+}})\big)\lor(\xi_t-u_t)\Bigg).
\end{equation*}
Since $m^i_{t^+}<m^i_t$, and from \eqref{40}, we get
 $$\big((\xi_t-l_t)\land(\xi_{t^{+}}-l_{t^{+}})\big)\lor(\xi_t-u_t)=\xi_t-u_t,$$
 it follows that
\begin{equation*}
m^i_{t}=\min\Bigg(\inf_{s_i\leq s<t }\Big(\big((\xi_s-l_s)\land(\xi_{s^{+}}-l_{s^{+}})\big)\lor(\xi_s-u_s)\Big), \xi_t-u_t\Bigg). \label{31}
\end{equation*}
Using the fact that $s_i\leq t<t_i$, we get
$$\xi_t-u_t\leq\inf_{s_i\leq s<t }\big(\xi_s-l_s)\leq \inf_{s_i\leq s<t }\Big(\big((\xi_s-l_s)\land(\xi_{s^{+}}-l_{s^{+}})\big)\lor(\xi_s-u_s)\Big).$$
Therefore, $m^i_{t}=\xi_t-u_t$,  hence $x_t=u_t$, therefore \eqref{d1} holds.\\
Now, we show that
\begin{equation*}
\int_{[0,+\infty[} \big((x_{s}-l_s)\land (x_{s^{+}}-l_{s^+})\big)\rm{d}\phi^{1,r}_{s}=0.
\end{equation*}
Let $t\geq 0$ such that $\Delta^+\phi^1_t=0$ and $\phi^1_{t^-}<\phi^1_s$, for every $s>t.$ We distinguish four cases:
\begin{itemize}
\item[$\bullet$]If $t\in]s_i,t_i[$ for some $i\geq 1$, then $k_t=-\alpha(y)_t-m^i_t$. Since $\rm{d}\phi^{1,r}$ and $\rm{d}\phi^{2,r}$ have disjoint support and $\phi^{2,g}_s=\phi^{2,g}_t$ for every $s\in]t,t_i[$, then $\Delta^+k_t=0$ and $k_{t^-}<k_s$ for every $s\in]t,t_i[$. We distinguish two cases:
\begin{itemize}
\item[\textbf{case 1:}]   $m^i_{t^-}>m^i_s$ for every $s\in]t,t_i[$,
 then for every $n\geq 1$, we have 
\begin{equation}
\begin{split}
m^i_{t+\frac{1}{n}}&=\min\Bigg(m^i_{t^-}, \inf_{t\leq s\leq t+\frac{1}{n} }\Big(\big((\xi_s-l_s)\land(\xi_{s^{+}}-l_{s^{+}})\big)\lor(\xi_s-u_s)\Big)\Bigg),\label{41}
\end{split}
\end{equation}
Using the fact that  $m^i_{t+\frac{1}{n}}< m^i_{t^-}$, we obtain:
\begin{equation*}
m^i_{t+\frac{1}{n}}=\inf_{t\leq s\leq t+\frac{1}{n} }\Big(\big((\xi_s-l_s)\land(\xi_{s^{+}}-l_{s^{+}})\big)\lor(\xi_s-u_s)\Big).
\end{equation*}
By tending $n$ to infinity, we obtain:
\begin{equation*}
m^i_{t^+}=\min\Big(\big((\xi_t-l_t)\land(\xi_{t^{+}}-l_{t^{+}})\big)\lor(\xi_t-u_t),\;\xi_{t^{+}}-l_{t^{+}} \Big)=(\xi_t-l_t)\land(\xi_{t^{+}}-l_{t^{+}}) \label{32}
\end{equation*}
Since $k_t=k_{t^+}$, we get
\begin{center}
 $x_t=l_t$ or $x_{t^+}=l_{t^+}$.
\end{center}
\item[\textbf{case 2:}]  If there exists $s'\in]t,t_i[$ such that $m^i_{t^-}=m^i_{s'}$, then  for every $s>t$, $\alpha^l(y)_{t^-}>\alpha^l(y)_{s}$ and
  $m^i_{t^-}=m^i_{t^+}$. By Proposition \ref{A1} $(\xi,-\alpha^l(y))=RP_l(y)$, then $\xi_t=l_t$ or $\xi_{t^+}=l_{t^+}$, and from \eqref{41} we get $m^i_{t^+}=\min(m^i_{t^-},0)$.
Since $m^i$ is non-negative,  $m^i_{t^+}=m^i_{t}=m^i_{t^-}=0$, therefore
$x=\xi$, hence $x_t=l_t$ or $x_{t^+}=l_{t^+}$.
\end{itemize}
\item[$\bullet$] If $t=s_i$ for some $i$, we have $\Delta^+k_t=0$, then
$$m^i_t=m^i_{t^+}=\min(m^i_t,\xi_{t^+}-l_{t^+} )=\min(\big((\xi_t-l_t)\land(\xi_{t^{+}}-l_{t^{+}})\big)\lor(\xi_t-u_t),\xi_{t^+}-l_{t^+} )$$
  this implies,  $m^i_t=\xi_t-l_t$,  hence $x_t=l_t$.

\item[$\bullet$] If $t=t_i$ for some $i$, then
$k_{t^-}=-\alpha^l(y)_{t^-}-m^i_{t^-}$ and $k_{t}=-\alpha^l(y)_{t}-n^i_{t}$. We distinguish two cases:
\begin{itemize}
\item[\textbf{case 1:}] If $\alpha^l(y)_{t^-}>\alpha^l(y)_{s}$ for every $s>t$, then $\xi_t=l_t$ or $\xi_{t^+}=l_{t^+}$, on the other hand, we have 
$x=\xi-n^i\leq \xi$, and since $x\geq l$, $x_t=l_t$ or $x_{t^+}=l_{t^+}$.
\item[\textbf{case 2:}] If there exists $s'>t$ such that $\alpha^l(y)_{t^-}=\alpha^l(y)_{s'}$. Using the fact that the right jumps of $k$ are non-positive on $[t_i,s_{i+1}[$, $\phi^1_{t^-}<\phi^1_s$, for $s>t$, and $\rm{d}\phi^{1,r}$ and $\rm{d}\phi^{2,r}$ have disjoint support,   we get $k_{t^-}<k_{t}$, then
$$m^i_{t^-}>n^i_{t}= \big((\xi_t-u_t)\lor(\xi_{t^{+}}-u_{t^{+}})\big)^+\land(\xi_t-l_t) $$
On one hand, by the definition of $t_i$, we have
\begin{equation*}
\begin{split}
\inf_{s_i\leq s\leq t}(\xi_s-l_s)\leq \big((\xi_t-u_t)\lor(\xi_{t^+}-u_{t^+})\big)^+\land(\xi_t-l_t)
\end{split}
\end{equation*}
On the other hand, using the fact that $l\leq u$ and by  definition of $t_i$, we get
\begin{equation*}
\inf_{s_i\leq s\leq t}(\xi_s-l_s)\geq \min(m^i(t^-), \xi_t-l_t)
\end{equation*}
then
$n^i_t=(\xi_t-l_t)$, hence $x_t=l_t$.
\item[$\bullet$] If $t\in]t_i,s_{i+1}[$ for same $i$, there is only the case when $\alpha(y)_{t^-}>\alpha(y)_{s}$ for every $s>t$. The proof  follows similarly as the previous case.
\end{itemize}
This complete the proof.

By the same method and similar computation, we can  show  \eqref{d2} and 
$$\int_{[0,+\infty[} \big((u_s-x_{s})\land (u_{s^+}-x_{s^{+}})\big)\rm{d}\phi^{2,r}_{s}= 0.
$$
	\end{itemize}
\end{itemize}

\begin{proposition}
	Let $l$ and $u$ be two elements of $\mathcal{R}(\mathbb{R}^+,\mathbb{R})$, such that $l_0\leq y_0\leq u{_0}$, $l\leq u$, and for all $t\geq 0$,  $\inf\limits_{s\leq t}(u_s-l_s)>0$. Let $\Gamma_l$ and  $\Gamma_l^u$ be the Skorokhod maps in the time-dependent interval $[l,+\infty[$, and $[l,u]$ respectively. We consider the following  mapping 	
\begin{align*}
\Lambda:\mathcal{R}(\mathbb{R}^+,\mathbb{R})&\rightarrow \mathcal{R}(\mathbb{R}^+,\mathbb{R})\\
f&\mapsto f-\Theta^u_l(f)
\end{align*} 
then, we  have
$$\Gamma_l^u=\Lambda\circ\Gamma_l.$$
\end{proposition}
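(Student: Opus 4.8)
The plan is to show that for any $f \in \mathcal{R}(\mathbb{R}^+,\mathbb{R})$, the pair $\big(\Lambda(\Gamma_l(f)), \Gamma_l(f) - \Lambda(\Gamma_l(f))\big)$ — or, written in the notation of the preceding results, $\big(\xi - \Theta^u_l(\xi),\ k\big)$ where $\xi = \Gamma_l(f)$ — coincides with the unique solution $\Gamma^u_l(f)$ of $RP^u_l(f)$. By Theorem \ref{uniqueness} the solution of $RP^u_l(f)$ is unique, so it suffices to identify a known candidate. First I would recall from the proof of Theorem \ref{th1} that the explicit solution of $RP^u_l(f)$ is $x_t = f_t + k_t$ with $k_t = -\big(\alpha^l(f)_t \lor \beta^u_l(f)_t\big)$. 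So $\Gamma^u_l(f)_t = f_t - \big(\alpha^l(f)_t \lor \beta^u_l(f)_t\big)$.

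Next I would invoke Lemma \ref{prop3}, which gives precisely the bridge between the two-barrier and one-barrier objects: with $(\xi,\kappa) = RP_l(f)$ (so $\xi = \Gamma_l(f)$ and, by Proposition \ref{r1}, $\kappa = -\alpha^l(f)$, i.e. $\xi = f - \alpha^l(f)$), one has $\alpha^l(f)_t \lor \beta^u_l(f)_t = \alpha^l(f)_t + \Theta^u_l(\xi)_t$ for all $t \geq 0$. Substituting this into the expression for $\Gamma^u_l(f)$ yields
\begin{equation*}
\Gamma^u_l(f)_t = f_t - \alpha^l(f)_t - \Theta^u_l(\xi)_t = \xi_t - \Theta^u_l(\xi)_t = \Lambda(\xi)_t = \big(\Lambda \circ \Gamma_l(f)\big)_t,
\end{equation*}
which is the claimed identity. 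The hypotheses $l_0 \leq y_0 \leq u_0$, $l \leq u$, and $\inf_{s \leq t}(u_s - l_s) > 0$ are exactly those needed to apply Theorems \ref{uniqueness} and \ref{th1} and Lemma \ref{prop3}, so nothing extra is required.

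The only genuine point to be careful about is making sure the notational identifications are legitimate: that $\Gamma_l(f)$ in the statement really is the first coordinate $\xi$ of the solution of $RP_l(f)$ referenced in Lemma \ref{prop3} (this is the definition of the one-barrier Skorokhod map $\Gamma_l$), and that $\Gamma^u_l(f)$ is the first coordinate $x$ of the solution of $RP^u_l(f)$ furnished by Theorem \ref{th1}. Both are matters of unwinding definitions rather than real work. The substantive content — that $\Theta^u_l$ applied to the once-reflected path accounts exactly for the discrepancy between the one- and two-sided reflection terms — has already been carried out in Lemma \ref{prop3}, so this proposition is essentially a one-line corollary of that lemma together with the uniqueness statement. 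I therefore expect no serious obstacle; the proof is a short chain of substitutions, and the writeup consists mainly of stating which earlier results are being combined and displaying the resulting equality.
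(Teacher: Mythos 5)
Your proposal is correct and follows exactly the paper's own route: the paper's proof consists of the single line that the result ``follows immediately from Theorem \ref{th1} and Lemma \ref{prop3},'' and your chain of substitutions $\Gamma^u_l(f) = f - (\alpha^l(f)\lor\beta^u_l(f)) = f - \alpha^l(f) - \Theta^u_l(\xi) = \Lambda(\Gamma_l(f))$ is precisely the intended argument. Nothing is missing.
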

\begin{proof}
The proof follows immediately from Theorem \ref{th1}, and Lemma \ref{prop3}.
\end{proof}
\begin{remark}
\begin{itemize}
	\item[i)]Note that the definition and solution of the reflection problem $RP_l^u$ coincide  with the classical reflection problem when the driven process $y$ and the barriers are right continuous left limited.
		\item[ii)] The mapping $\Lambda$ coincide with the mapping $\Lambda_a$ defined in \cite{kruk} on the space of right continuous functions with left limits and the case when $l$ and $u$  are constant $(l=0\;\text{ and }\; u=a)$. 
\end{itemize}
\end{remark}
\begin{remark}\label{remark}

$k$ has the following explicit expressions:
	\begin{equation*}
	k_t=\max\Big(\min\big(k_{t_-},((u_t-y_t)\land(u_{t_+}-y_{t_+}))\lor(l_t-y_t)\big), ((l_t-y_t)\lor(l_{t_+}-y_{t_+}))\land(u_t-y_t)\Big),
	\end{equation*}
	and
	\begin{equation*}
	k_{t_+}=\max\Big(\min\big(k_t,u_{t_+}-y_{t_+}),l_{t_+}-y_{t_+}\big)\Big).
	\end{equation*}
	Indeed,  if $\Delta^-k_t<0$ and $\Delta^+k_t=0$ then from the statement (iv) in Definition \ref{def1}, $k_t=(u_t-y_t)\land(u_{t_+}-y_{t_+})$. If 
	$\Delta^-k_t<0$ and $\Delta^+k_t\neq 0$, then $k_t=u_t-y_t$ if $\Delta^+k_t>0$ and $k_t=l_t-y_t$ otherwise. This implies 
	$$k_t=\min(k_{t_-},\big((u_t-y_t)\land(u_{t_+}-y_{t_+}))\lor(l_t-y_t)\big)).$$
	 If $\Delta^-k_t>0$ and $\Delta^+k_t=0$ then from the statement (iv) in Definition \ref{def1}, $k_t=(l_t-y_t)\lor(l_{t_+}-y_{t_+})$. If 
	$\Delta^-k_t>0$ and $\Delta^+k_t\neq 0$, then $k_t=u_t-y_t$ if $\Delta^+k_t>0$ and $k_t=l_t-y_t$ otherwise. This implies $k_t=((l_t-y_t)\lor(l_{t_+}-y_{t_+}))\land(u_t-y_t)\geq k_{t_-}$.
	Hence the expression of $k$ follows. 
	
	From the statement (v) in Definition \ref{def1},we get  the expression of $k_+$.
\end{remark}
\begin{proposition}\label{prop2}
	Let $y$, $\tilde{y}$, $l$, $\tilde{l}$, $u$, and $\tilde{u}$ be in $\mathcal{R}(\mathbb{R}^{+},\mathbb{R})$. Let $(x,k)$ and $(\tilde{x},\tilde{k})$ be the solutions associated to $RP^u_l(y)$ and $RP^{\tilde{u}}_{\tilde{l}}(\tilde{y})$, respectively. Let $T>0$, and denote by $||.||_{<T}$ the supremum norm on the interval $[0,T[$. Then, we have :
$$||k-\tilde{k}||_{<T}\leq 2(||y-\tilde{y}||_{<T}+||l-\tilde{l}||_{<T}+||u-\tilde{u}||_{<T}),$$
$\;\;\text{and}\;\;$
$$||x-\tilde{x}||_{<T}\leq 3(||y-\tilde{y}||_{<T}+||l-\tilde{l}||_{<T}+||u-\tilde{u}||_{<T}).$$
\end{proposition}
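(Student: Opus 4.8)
The plan is to read off $(x,k)$ and $(\tilde x,\tilde k)$ from the closed form obtained in Theorem~\ref{th1}, namely $k=-\big(\alpha^{l}(y)\lor\beta^{u}_{l}(y)\big)$, $x=y+k$ (and the analogous formulas for $(\tilde x,\tilde k)$ built from $\tilde y,\tilde l,\tilde u$), and then to exploit that every operation appearing in the definitions of $\alpha^{l}(\cdot)$ and $\beta^{u}_{l}(\cdot)$ is nonexpansive for the supremum norm: $|\sup_{i}a_{i}-\sup_{i}b_{i}|\le\sup_{i}|a_{i}-b_{i}|$, the same with $\inf$, $|a\lor b-c\lor d|\le|a-c|\lor|b-d|$, $|a\land b-c\land d|\le|a-c|\lor|b-d|$, and $|a^{+}-b^{+}|\le|a-b|$. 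One bookkeeping point must be settled first: since $\|\cdot\|_{<T}$ is the sup over the \emph{open} interval $[0,T[$, for every $s<T$ the right limit $f_{s^{+}}=\lim_{r\searrow s}f_{r}$ may be evaluated along $r\in(s,T)$, so $\sup_{s<T}|f_{s^{+}}-\tilde f_{s^{+}}|\le\|f-\tilde f\|_{<T}$; likewise every $\sup_{s\le t}$ and $\inf_{s\le r\le t}$ with $t<T$ only sees points of $[0,T[$. Write $\delta:=\|y-\tilde y\|_{<T}+\|l-\tilde l\|_{<T}+\|u-\tilde u\|_{<T}$.

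First I would estimate the two building blocks. For the lower-barrier term, each integrand $(y_{s}-l_{s})\land(y_{s^{+}}-l_{s^{+}})\land0$ differs from its tilded counterpart by at most $\|y-\tilde y\|_{<T}+\|l-\tilde l\|_{<T}$ uniformly in $s<T$, so passing to $\inf_{s\le t}$ gives $\|\alpha^{l}(y)-\alpha^{\tilde l}(\tilde y)\|_{<T}\le\|y-\tilde y\|_{<T}+\|l-\tilde l\|_{<T}$. For $\beta$ I would peel the expression from the inside out: the innermost bracket $\big((y_{r}-l_{r})\land(y_{r^{+}}-l_{r^{+}})\big)\lor(y_{r}-u_{r})$ moves by at most $\|y-\tilde y\|_{<T}+\max(\|l-\tilde l\|_{<T},\|u-\tilde u\|_{<T})$; the $\inf$ over $r\in[s,t]$ inherits this bound; the outer factor $(y_{s}-u_{s})\lor(y_{s^{+}}-u_{s^{+}})$ moves by at most $\|y-\tilde y\|_{<T}+\|u-\tilde u\|_{<T}$; and the $\land$ of the two, followed by $\sup_{s\le t}$, keeps the estimate, giving $\|\beta^{u}_{l}(y)-\beta^{\tilde u}_{\tilde l}(\tilde y)\|_{<T}\le\delta$. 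Since $k=-(\alpha\lor\beta)$, nonexpansiveness of $\lor$ then yields $\|k-\tilde k\|_{<T}\le\max\big(\|\alpha^{l}(y)-\alpha^{\tilde l}(\tilde y)\|_{<T},\,\|\beta^{u}_{l}(y)-\beta^{\tilde u}_{\tilde l}(\tilde y)\|_{<T}\big)\le\delta$, which is in particular the first asserted bound $\|k-\tilde k\|_{<T}\le2\delta$; and from $x=y+k$ one gets $\|x-\tilde x\|_{<T}\le\|y-\tilde y\|_{<T}+\|k-\tilde k\|_{<T}\le2\delta$, hence in particular the second asserted bound $\|x-\tilde x\|_{<T}\le3\delta$.

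I do not expect a genuine obstacle here: once the closed form of Theorem~\ref{th1} is in hand the estimate is a chain of nonexpansive bounds, and the only thing requiring attention is the bookkeeping around the right limits $f_{s^{+}}$ and the open endpoint $T$, i.e.\ checking at each peeling step that all the arguments stay inside $[0,T[$ — which is precisely the purpose of the preliminary remark. A more laborious alternative would be to argue trajectorially through the recursion of Remark~\ref{remark} together with the factorization $\Gamma^{u}_{l}=\Lambda\circ\Gamma_{l}$ established above, estimating $\Gamma_{l}$ and $\Lambda$ separately; that route works as well but generates more cases and weaker constants, so I would keep the closed-form argument.
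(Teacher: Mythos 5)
Your argument is correct, and it follows the same basic line as the paper's proof---read $k$ off the closed form of Theorem~\ref{th1} and push the nonexpansiveness of $\sup$, $\inf$, $\lor$, $\land$ through the formula---but with a different choice of representation. The paper first invokes Lemma~\ref{prop3} to rewrite $k=-\big(\alpha^{l}(y)+\Theta^{u}_{l}(\xi)\big)$ with $\xi$ the one-barrier reflection of $y$, estimates $\|\Theta^{u}_{l}(\xi)-\Theta^{\tilde u}_{\tilde l}(\tilde\xi)\|_{<T}$ in terms of $\|\xi-\tilde\xi\|_{<T}$, and then converts back to $\|y-\tilde y\|_{<T}$ via the Lipschitz bound for $\alpha$; this detour through the one-barrier map is what generates the constants $2$ and $3$ (and is consistent with the factorization $\Gamma^{u}_{l}=\Lambda\circ\Gamma_{l}$ the paper establishes). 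You instead work directly with $k=-\big(\alpha^{l}(y)\lor\beta^{u}_{l}(y)\big)$ and peel the expression for $\beta$ from the inside out; writing $\delta$ for the sum of the three sup-norm distances, this yields the sharper bounds $\|k-\tilde k\|_{<T}\le\delta$ and $\|x-\tilde x\|_{<T}\le 2\delta$, which of course imply the stated inequalities. Your preliminary bookkeeping about right limits, $\sup_{s<T}|f_{s^{+}}-\tilde f_{s^{+}}|\le\|f-\tilde f\|_{<T}$, is exactly the observation the paper also uses. The only point worth making explicit is that the closed form is available only under the standing hypotheses of Theorem~\ref{th1} ($l\le u$, $l_{0}\le y_{0}\le u_{0}$, $\inf_{s\le t}(u_{s}-l_{s})>0$, and likewise for the tilded data), which the proposition statement leaves implicit; both your proof and the paper's rely on them.
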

\begin{proof}
	Let $(\xi,\kappa)=RP_l(y)$, and $(\tilde{\xi},\tilde{\kappa})=RP_{\tilde{l}}(\tilde{y})$. Let $t\geq 0$, from Lemma \ref{prop3}, we have
\begin{equation*}
k_t=-\bigg(\alpha^{l}(y)_t + \Theta^u_l(\xi)_t\bigg),\;\text{ and }\; \tilde{k}_t=-\bigg(\alpha^{\tilde{l}}(\tilde{y})_t + \Theta^{\tilde{u}}_{\tilde{l}}(\tilde{\xi})_t\bigg).
\end{equation*}

Using the formulas $|p\land q- p'\land q'|\leq|p-p'|\lor|q-q'| $, and $|p\lor q- p'\lor q'|\leq|p-p'|\lor|q-q'|$,  we obtain
	\begin{equation*}
	\begin{split}
	|\Theta^u_l(\xi)_t- \Theta^{\tilde{u}}_{\tilde{l}}(\tilde{\xi})_t|
	&\leq \Bigg( \sup_{s\leq t}\Big(\xi_s-u_s-\tilde{\xi}_s+\tilde{u}_s \Big)\lor \sup_{s\leq t}\Big(\xi_{s^+}-u_{s^+}-\tilde{\xi}_{s^+}+\tilde{u}_{s^+} \Big)\lor\sup_{s\leq t}\Big(\xi_s-l_s-\tilde{\xi}_s+\tilde{l}_s \Big)\\
	&\lor \sup_{s\leq t}\Big(\xi_{s^+}-l_{s^+}-\tilde{\xi}_{s^+}+\tilde{l}_{s^+} \Big) \Bigg).
	\end{split}
	\end{equation*}
	Using the fact that $$\sup_{s<T }\Big(\xi_{s^+}-u_{s^+}-\tilde{\xi}_{s^+}+\tilde{u}_{s^+} \Big)\leq\sup_{s<T}\Big(\xi_s-u_s-\tilde{\xi}_s+\tilde{u}_s \Big) ,$$
	and that
	$$ \sup_{s<T }\Big(\xi_{s^+}-l_{s^+}-\tilde{\xi}_{s^+}+\tilde{l}_{s^+} \Big) \leq \sup_{s<T}\Big(\xi_s-l_s-\tilde{\xi}_s+\tilde{l}_s \Big) $$
	we get
	$$||\Theta^u_l(\xi)- \Theta^u_l(\tilde{\xi})||_{<T}\leq ||\xi-\tilde{\xi}||_{<T}+||l-\tilde{l}||_{<T}+||u-\tilde{u}||_{<T}.$$
	Using the same arguments, we see that
		$$||\alpha^l(y)- \alpha^l(\tilde{y})||_{<T}\leq ||y-\tilde{y}||_{<T}+||l-\tilde{l}||_{<T}+||u-\tilde{u}||_{<T},$$
	 since $\xi=y-\alpha^l(y)$ and $x=y+k$, we obtain
	$$||k-\tilde{k}||_{<T}\leq 2(||y-\tilde{y}||_{<T}+||l-\tilde{l}||_{<T}+||u-\tilde{u}||_{<T},$$
and
	$$||x-\tilde{x}||_{<T}\leq 3(||y-\tilde{y}||_{<T}+||l-\tilde{l}||_{<T}+||u-\tilde{u}||_{<T}.$$
\end{proof}
\begin{remark}
Let $l$ and ,$u$ be two elements of $\mathcal{R}(\mathbb{R}^{+},\mathbb{R})$ such that $l\leq u$ then for all $t\geq 0$, we have
\begin{equation*}
\inf\limits_{s\leq t}(u_s-l_s)>0 \Longleftrightarrow l_{t^-}<l_{t^-}, l_t<u_t, \text{ and }\, l_{t^+}<u_{t^+}.
\end{equation*}

Indeed, The direct implication is obvious. Conversely, let $t\geq 0$ and  assume that
$l_{t^-}<u_{t^-}$, $l_t<u_t$, and $l_{t^+}<u_{t^+}$.
By the sequential characterization of the infimum, we can find a sequence $(s_{n})_{n\geq 0}$ in $[0,t]$ such that, $$\lim\limits_{
	n\rightarrow+\infty}(u_{s_{n}}-l_{s_{n}})=\inf_{s\in [0,t]}(u_{t}-l_{t}).$$
 There exists a  sub-sequence $(s_{n_{k}})_{k\geq 0}$  converging to $s\in [0,t]$. Since either one of the three sets, 
$\{k: s_{n_{k}}>s\}$,  $\{k: s_{n_{k}}=s\}$, or $\{k: s_{n_{k}}<s\}$,  is infinite
we obtain,
$$\lim\limits_{k\rightarrow+\infty}(u_{s_{n_{k}}}-l_{s_{n_{k}}})=  u_{s}-l_{s}\;\text{or}\; \lim\limits_{k\rightarrow+\infty}(u_{s_{n_{k}}}-l_{s_{n_{k}}})=  u_{s^{-}}-l_{s^{-}},\;\text{or}\; \lim\limits_{k\rightarrow+\infty}(u_{s_{n_{k}}}-l_{s_{n_{k}}})=  u_{s^{+}}-l_{s^{+}}.$$
which implies that 
$\inf_{t\in [0,T]}(u_{t}-l_{t})>0.$

\end{remark}

\section{SDEs driven by optional semimartingales and two reflecting barriers}\label{sec4}
 Throughout  this Section, 
 we consider a complete probability space $(\Omega,\mathcal{F},\mathbb{P})$ equipped with a filtration $\mathbb{F}=(\mathcal{F}_{t})_{t\geq 0}$ which is not necessarily right or left continuous and it is not necessary  completed.

 We define the  problem of reflection  for processes as follows:	
  
\begin{definition}
	Let $Y$, $L$, and $U$ be three optional processes with regulated trajectories, such that $L\leq U$, and $L_0\leq Y_0\leq U_0$. We say that a pair of  processes $(X,K)$ is a solution of the reflection problem associated with $Y$ and the barriers $L$ and $U$ if $X$ and $K$ are  $\mathcal{O}(\mathbb{F}^\mathbb{P})$ measurable, and  $(X,K)$ is a solution of $RP^U_L(Y)$. 
	\end{definition} 
\begin{proposition}\label{p}
	Let $Y$ be a semimartingale, $L$ and $U$ be predictable  processes with regulated trajectories such that  $\inf\limits_{s\leq t}(U_s-L_s)>0$ for every $t\geq 0$ and $L_0\leq Y_0\leq U_0$. There exists an explicit unique solution to the reflection problem $RP^U_L(Y)$, which is given by
	$$K=-\Big(\alpha^{L}(Y)\lor \beta^{U}_L(Y)\Big)=-\Big(\alpha^{L}(Y) + \Theta^u_l(\xi)\Big),\; X=Y+K,$$
	where $\xi$ is the first coordinate  of the reflection problem $RP_L(Y)$.
\end{proposition}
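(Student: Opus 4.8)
The plan is to solve the deterministic problem $RP^{U(\omega)}_{L(\omega)}(Y(\omega))$ path by path and then to check that the resulting processes have the required measurability. First, fix $\omega\in\Omega$. As $Y$ is an optional semimartingale and $L,U$ are predictable with regulated trajectories, the paths $Y_{\cdot}(\omega),L_{\cdot}(\omega),U_{\cdot}(\omega)$ belong to $\mathcal{R}(\mathbb{R}^{+},\mathbb{R})$ and satisfy $L(\omega)\leq U(\omega)$, $\inf_{s\leq t}(U_s(\omega)-L_s(\omega))>0$ for every $t\geq 0$, and $L_0(\omega)\leq Y_0(\omega)\leq U_0(\omega)$. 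Hence Theorem \ref{th1} applies pathwise and yields $k_t(\omega)=-\big(\alpha^{L(\omega)}(Y(\omega))_t\lor\beta^{U(\omega)}_{L(\omega)}(Y(\omega))_t\big)$, $x_t(\omega)=Y_t(\omega)+k_t(\omega)$ solving $RP^{U(\omega)}_{L(\omega)}(Y(\omega))$, while Theorem \ref{uniqueness} shows this is the unique solution. Defining $X$ and $K$ by these formulas and rewriting $K=-\big(\alpha^{L}(Y)+\Theta^{U}_{L}(\xi)\big)$ via Lemma \ref{prop3}, where $\xi$ is the first coordinate of the one-barrier problem $RP_{L}(Y)$, we obtain a pair of processes that is, pathwise, the unique solution of $RP^{U}_{L}(Y)$. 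It thus remains only to show that $X$ and $K$ are $\mathcal{O}(\mathbb{F}^{\mathbb{P}})$-measurable.

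For the measurability I would use the factorization $\Gamma^{U}_{L}=\Lambda\circ\Gamma_{L}$ established above, with $\Lambda(f)=f-\Theta^{U}_{L}(f)$, and reduce everything to the statement that the functionals $\alpha^{L}(\cdot)$ and $\Theta^{U}_{L}(\cdot)$ carry optional processes with regulated paths to optional processes (then $\xi=Y-\alpha^{L}(Y)$, $X=\Lambda(\xi)$ and $K=X-Y$ are optional as well). Each of these functionals is a finite combination of the operations $f\mapsto\sup_{s\leq t}(\,\cdot\,)$ and $f\mapsto\inf_{s\leq r\leq t}(\,\cdot\,)$ applied to regulated processes and their one-sided limits, together with $\land,\lor$ and $(\cdot)^{+}$, so it suffices to treat running and windowed extrema of a single $\mathcal{O}(\mathbb{F}^{\mathbb{P}})$-measurable regulated process $Z$. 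For $s\leq t$ the values $Z_s,Z_{s^{-}}$ are $\mathcal{F}_{t}^{\mathbb{P}}$-measurable, and $Z_{s^{+}}$ is $\mathcal{F}_{s^{+}}^{\mathbb{P}}\subseteq\mathcal{F}_{t}^{\mathbb{P}}$-measurable for $s<t$; since the path of $Z$ is regulated, a supremum (or infimum) over $[0,t]$ (or over $[s,t]$) is attained along a countable set — the rationals together with a sequence exhausting the countable optional set of jump times — so these extrema are $\mathcal{F}_{t}^{\mathbb{P}}$-measurable and, being themselves regulated (indeed monotone) in $t$, define $\mathcal{O}(\mathbb{F}^{\mathbb{P}})$-measurable processes. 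Combining these observations, $\alpha^{L}(Y)$ and $\Theta^{U}_{L}(\xi)$, hence $K=-\big(\alpha^{L}(Y)+\Theta^{U}_{L}(\xi)\big)$ and $X=Y+K$, are $\mathcal{O}(\mathbb{F}^{\mathbb{P}})$-measurable.

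The step I expect to be the genuine obstacle is precisely this last point in the \emph{un}usual setting: the expressions defining $\alpha^{L}(Y)_t$ and $\Theta^{U}_{L}(\xi)_t$ involve the right limits $Y_{t^{+}},\xi_{t^{+}},L_{t^{+}},U_{t^{+}}$ at the current time $t$, which are a priori only $\mathcal{F}_{t^{+}}^{\mathbb{P}}$-measurable, so one has to argue carefully that the particular combinations in which they occur remain $\mathcal{F}_{t}^{\mathbb{P}}$-measurable — i.e. that running suprema and infima of optional regulated processes stay $\mathbb{F}^{\mathbb{P}}$-optional and not merely $\mathbb{F}_{+}^{\mathbb{P}}$-optional. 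To make this rigorous I would transport the measurability machinery already developed for the one-barrier problem in \cite{jarni} (and, on the deterministic side, \cite{hilbert}) to the pair $(\xi,U)$ in place of $(Y,L)$; alternatively, I would use the Lipschitz estimate of Proposition \ref{prop2}, approximating $Y,L,U$ uniformly on compact sets by right-continuous optional/predictable processes — for which $\alpha$ and $\Theta$ are plainly optional — whereupon the corresponding solutions converge uniformly and, the optional class being stable under such limits, the limiting pair $(X,K)$ is optional. The remaining verifications are routine.
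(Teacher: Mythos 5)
Your pathwise part (apply Theorems \ref{uniqueness} and \ref{th1} for each fixed $\omega$, then rewrite $K$ via Lemma \ref{prop3}) is exactly what the paper does, and you correctly identify the real content of the proposition as the $\mathcal{O}(\mathbb{F}^{\mathbb{P}})$-measurability of $K$ in the \emph{un}usual setting. But you do not actually close that gap. As you yourself observe, the term indexed by $s=t$ in $\alpha^{L}(Y)_t$ and $\Theta^{U}_{L}(\xi)_t$ involves $Y_{t^{+}}$, $L_{t^{+}}$, $U_{t^{+}}$, which are only $\mathcal{F}_{t^{+}}^{\mathbb{P}}$-measurable a priori; your countable-dense-set argument for the extrema therefore only yields $\mathcal{O}(\mathbb{F}_{+}^{\mathbb{P}})$-measurability (which is all one gets when $L,U$ are merely optional -- see the remark following the proposition), and your argument never uses the hypothesis that $L$ and $U$ are \emph{predictable}, which is precisely what makes the stronger conclusion true. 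The paper's missing ingredient is Theorem 1.5 of Gal'{\v c}huk \cite{galchuk}: writing $\alpha^{L}(Y)_t=\alpha^{L}(Y)_{t^-}\land(Y_t-L_t)\land(Y_{t^{+}}-L_{t^{+}})$, one replaces $Y^{g}_{+}$, $L_{+}$, $U_{+}$ (which are $\mathcal{P}(\mathbb{F}_{+}^{\mathbb{P}})$-measurable) by indistinguishable $\mathcal{P}(\mathbb{F})$-measurable versions, whence $\alpha^{L}(Y)$ and $\beta^{U}_{L}(Y)$ are indistinguishable from $\mathbb{F}$-adapted processes and $K$ is an $\mathbb{F}^{\mathbb{P}}$-adapted process of bounded variation, hence $\mathcal{O}(\mathbb{F}^{\mathbb{P}})$-measurable.

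Of your two proposed repairs, the first (transport the machinery of \cite{jarni}, \cite{hilbert}) is a deferral rather than a proof, though it points at the right tool, since those papers rely on the same Gal'{\v c}huk indistinguishability theorem. The second would fail outright: a regulated process $Z$ with a right jump $\Delta^{+}Z_{t}\neq 0$ cannot be approximated uniformly on compacts by right-continuous processes, since $|Z-Z'|<\varepsilon$ uniformly with $Z'$ right-continuous forces $|\Delta^{+}Z_{t}|\leq 2\varepsilon$; so the approximation scheme via Proposition \ref{prop2} cannot be set up for the left-continuous parts of $Y$, $L$, $U$, which is exactly where the difficulty lives.
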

\begin{proof}
It follows from Theorems \ref{uniqueness} and \ref{th1} that  for every $\omega\in\Omega$, there exists a unique solution $(X(\omega),K(\omega))$ to the reflection problem $RP^{U(\omega)}_{L(\omega)}(Y(\omega))$ such that
$$K(\omega)=-\Big(\alpha^{L(\omega)}(Y(\omega))\lor \beta^{U(\omega)}_{L(\omega)}(Y(\omega))\Big)=-\Big(\alpha^{L(\omega)}(Y(\omega))+ \Theta^{U(\omega)}_{L(\omega)}(\xi(\omega))\Big), $$
and 
$$X(\omega)=Y(\omega)+K(\omega).$$
Let us show that $X$ and $K$ are $\mathcal{O}(\mathbb{F}^{\mathbb{P}})$-measurable. Note that the process $\alpha^L(Y)$ can be rewritten as
  $$\alpha^L(Y)_t=\alpha^L(Y)_{t^-}\land(Y_t-L_t) \land(Y_{t^+}-L_{t^+}).$$
   Since $Y^g_+$, $L_{+}$, and $U_{+}$ are $\mathcal{P}(\mathbb{F}_+^{\mathbb{P}})$-measurable, from Theorem 1.5 in \cite{galchuk}, there exists $\overline{Y^g}$, $\overline{L}$, and $\overline{U}$  that are $\mathcal{P}(\mathbb{F})$ measurable which are indistinguishable from  $Y^g_+$, $L_{+}$, and $U_{+}$, respectively.
  Therefore, there exists an $\mathbb{F}$ -adapted process $\overline{\alpha}$ which is indistinguishable from $\alpha^L(Y)$. Similarly, we can find an  $\mathbb{F}$-adapted process indistinguishable from $\beta^U_L(Y)$. Hence, $K$ is  $\mathbb{F}^{\mathbb{P}}$-adapted process of bounded variation, therefore  $X$ and $K$ are  $\mathcal{O}(\mathbb{F}^{\mathbb{P}})$-measurable.
\end{proof}
\begin{remark}
Note that if the processes $Y$, $L$ and $U$ are optional  processes with regulated trajectories, and if $RP^U_L(Y)=(X,K)$, then $X$ and $K$ are $\mathcal{O}(\mathbb{F}_+^{\mathbb{P}})$-measurable.
\end{remark}
In what follows,  $L$ and $U$ are predictable processes with regulated trajectories  such that  $\inf\limits_{s\leq t}(U_s-L_s)>0$, $t\geq 0$. 
\begin{lemma}\label{lemma2}
	Let $Y$ and $\tilde{Y}$ be two semimartingales with the the following decomposition: $Y=M+V$ and $\tilde{Y}=\tilde{M}+\tilde{V}$  such that $L_0\leq Y_{0} \leq U_0$ and $L_0\leq \tilde{Y}_{0}\leq U_0$. Assume that $M$ and $\tilde{M}$
	are  in $\mathcal{M}^2$ and $V$ and $\tilde{V}$ are optional processes of bounded variation such that $|V|$ and $|\tilde{V}|$ are square integrable. Let $(X,K)$  and  $(\tilde{X},\tilde{K})$ be the solutions associated to $RP^U_L(Y)$ and $RP^U_L(\tilde{Y})$, respectively. Then, there exists a constant $C$ such that for all stopping times $\tau$  we have:
	\begin{multline*}
	\mathbb{E}(\operatorname*{ess\,sup}_{T\in\mathcal{T}_{<\tau}} |K_{T}-\tilde{K}_{T}|^{2})+\mathbb{E}(\operatorname*{ess\,sup}_{T\in\mathcal{T}_{<\tau}}  |X_{T}-\tilde{X}_{T}|^{2} )\leq\\  C \Bigg[\mathbb{E}([M-\tilde{M}]_{\tau^{-}}) +\mathbb{E}(<M-\tilde{M}>_{\tau^{-}})+\mathbb{E}\big((|V-\tilde{V}|_{\tau^-})^{2}\big)\Bigg].
	\end{multline*}
	Here $\mathcal{T}_{<\tau}$ denotes the set of all stopping times $T$ such that $\mathcal{P}(T<\tau)=1$, and $<.,.>$ is the strongly predictable increasing process associated with the martingale $M-\tilde{M}$.
\end{lemma}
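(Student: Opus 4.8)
The plan is to reduce everything to a pathwise estimate through Proposition~\ref{prop2} and then take expectations, the only probabilistic input being a maximal inequality for optional $L^2$-martingales. Since both reflection problems $RP^U_L(Y)$ and $RP^U_L(\tilde Y)$ carry the \emph{same} barriers $L,U$, Proposition~\ref{prop2} applied $\omega$ by $\omega$ on $[0,\tau(\omega)[$ makes the barrier terms disappear and yields the pathwise bounds
\begin{equation*}
\sup_{t<\tau}|K_t-\tilde K_t|\le 2\sup_{t<\tau}|Y_t-\tilde Y_t|,\qquad\sup_{t<\tau}|X_t-\tilde X_t|\le 3\sup_{t<\tau}|Y_t-\tilde Y_t|,
\end{equation*}
where the suprema are measurable because the processes are optional with regulated paths (on $\{\tau=+\infty\}$ one first works on $[0,n[$ and lets $n\to\infty$). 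For any $T\in\mathcal T_{<\tau}$ one has $|K_T-\tilde K_T|\le\sup_{t<\tau}|K_t-\tilde K_t|$ a.s., hence $\operatorname*{ess\,sup}_{T\in\mathcal T_{<\tau}}|K_T-\tilde K_T|\le\sup_{t<\tau}|K_t-\tilde K_t|$ a.s.\ (and the reverse bound holds up to a null set by the optional section theorem), and similarly for $X$. Squaring, summing and taking expectations, the claim reduces to
\begin{equation*}
\mathbb E\Big(\sup_{t<\tau}|Y_t-\tilde Y_t|^2\Big)\le C\Big(\mathbb E\big([M-\tilde M]_{\tau^-}\big)+\mathbb E\big(\langle M-\tilde M\rangle_{\tau^-}\big)+\mathbb E\big((|V-\tilde V|_{\tau^-})^2\big)\Big).
\end{equation*}

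Next I would decompose $Y-\tilde Y=(M-\tilde M)+(V-\tilde V)=:N+A$ with $N\in\mathcal M^2$ and $A$ optional of square-integrable variation, so that $\sup_{t<\tau}|Y_t-\tilde Y_t|^2\le 2\sup_{t<\tau}|N_t|^2+2\sup_{t<\tau}|A_t|^2$. The finite-variation part is handled at once: $\sup_{t<\tau}|A_t|\le|V-\tilde V|_{\tau^-}$, which produces the last term. For the martingale part I would use Gal'chuk's decomposition $N=N^r+N^g$ with $N^r=N^c+N^d$ right-continuous with left limits and $N^g$ left-continuous with right limits, both in $\mathcal M^2$. Doob's $L^2$ inequality on $[0,\tau[$ gives $\mathbb E(\sup_{t<\tau}|N^r_t|^2)\le 4\,\mathbb E(|N^r_{\tau^-}|^2)$, which the $L^2$-isometry bounds by $C\,\mathbb E([N^r]_{\tau^-})\le C\,\mathbb E([M-\tilde M]_{\tau^-})$. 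For the left-continuous part $N^g$ the classical maximal inequality does not apply; instead one invokes the Gal'chuk--Lenglart maximal inequality for left-continuous $L^2$-martingales (see \cite{galchuk}), which gives $\mathbb E(\sup_{t<\tau}|N^g_t|^2)\le C\,\mathbb E(\langle N^g\rangle_{\tau^-})\le C\,\mathbb E(\langle M-\tilde M\rangle_{\tau^-})$, the strongly predictable bracket $\langle\cdot\rangle$ entering precisely because of the $g$-component. Adding the two estimates and absorbing all numerical factors into a single constant $C$ finishes the proof.

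The deterministic Lipschitz reduction, the bound on the bounded-variation part, and the passage between pathwise suprema and essential suprema over stopping times are all routine. The real obstacle is the maximal control of the left-continuous component $N^g$: Doob's and the Burkholder--Davis--Gundy inequalities are built for right-continuous martingales, whereas here $N^g$ is genuinely left-continuous and is not a modification of any right-continuous martingale, so one has to rely on Gal'chuk's optional-martingale calculus on a filtration that need not satisfy the usual conditions; and it is the strongly predictable bracket, rather than the optional quadratic variation, that governs this part, which is the structural reason both brackets occur in the statement.
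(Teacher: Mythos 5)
Your overall architecture coincides with the paper's: reduce to $\mathbb{E}(\operatorname*{ess\,sup}_{T\in\mathcal{T}_{<\tau}}|Y_T-\tilde Y_T|^2)$ via the pathwise Lipschitz bound of Proposition \ref{prop2} (the barriers cancel since they are the same for both problems), absorb the finite-variation part into $|V-\tilde V|_{\tau^-}$, and split $N=M-\tilde M$ into $N^r+N^g$ by Gal'chuk's decomposition. The gap is in your estimate for the right-continuous part $N^r$. The bound
$\mathbb{E}(\sup_{t<\tau}|N^r_t|^2)\le C\,\mathbb{E}([N^r]_{\tau^-})$
is false for a general stopping time $\tau$: take $N^r_t=P_t-t$ a compensated Poisson process and $\tau=T_1$ its first jump time; then $\sup_{t<\tau}|N^r_t|=T_1$ so the left side equals $\mathbb{E}(T_1^2)=2$, while $[N^r]_{\tau^-}=\sum_{s<T_1}(\Delta N^r_s)^2=0$. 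The two steps you invoke both break down here: Doob's inequality is not available on the open random interval $[0,\tau[$ for a non-predictable $\tau$ (there is no announcing sequence of stopping times), and $\mathbb{E}(|N^r_{\tau^-}|^2)$ is \emph{not} controlled by $\mathbb{E}([N^r]_{\tau^-})$ --- the isometry holds at $\tau$, not at $\tau^-$, and passing to the closed interval would reintroduce the uncontrolled jump $(\Delta N^r_\tau)^2$. This is precisely why the paper uses the M\'etivier--Pellaumail inequality, $\mathbb{E}(\sup_{s<\tau}|N^r_s|^2)\le 4\,\mathbb{E}\big(\langle N^r\rangle_{\tau^-}+[N^r]_{\tau^-}\big)$ (in the example above $\langle N^r\rangle_{\tau^-}=T_1$, which saves the estimate).

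A secondary point: your structural explanation is reversed relative to the paper. The predictable bracket $\langle\cdot\rangle$ enters because of M\'etivier--Pellaumail applied to the \emph{right-continuous} part, not because of the $g$-component; the left-continuous part $N^g$ is handled in the paper by Gal'chuk's Burkholder--Davis--Gundy inequality in terms of the optional bracket, $\mathbb{E}(\sup_{s<\tau}|N^g_s|^2)\le C_1\mathbb{E}([N^g]_{\tau})=C_1\mathbb{E}([N^g]_{\tau^-})$, the last equality holding since $[N^g]$ is left-continuous. Your proposed predictable-bracket maximal inequality for $N^g$ is not the tool the paper uses and is not needed. With the $N^r$ estimate replaced by M\'etivier--Pellaumail, the rest of your argument goes through.
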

\begin{proof}
From the expression of $K$, and  by using the same method as in the proof of Proposition
\ref{prop2}, we can show that,
$$ \mathbb{E}(\operatorname*{ess\,sup}_{T\in\mathcal{T}_{<\tau}} |K_{T}-\tilde{K}_{T}|^{2})\leq 3 \mathbb{E}(\operatorname*{ess\,sup}_{T\in\mathcal{T}_{<\tau}} |Y_{T}-\tilde{Y}_{T}|^{2}).\;\;  $$
Consequently,
$$ \mathbb{E}(\operatorname*{ess\,sup}_{T\in\mathcal{T}_{<\tau}} |K_{T}-\tilde{K}_{T}|^{2})\leq 3 \Big( \mathbb{E}(\operatorname*{ess\,sup}_{T\in\mathcal{T}_{<\tau}} |M_{T}-\tilde{M}_{T}|^{2})+ \mathbb{E}(\operatorname*{ess\,sup}_{T\in\mathcal{T}_{<\tau}} |V_{T}-\tilde{V}_{T}|^{2})\Big)  $$
On one hand, we have,
$$\mathbb{E}(\operatorname*{ess\,sup}_{T\in\mathcal{T}_{<\tau}} |M_{T}-\tilde{M}_{T}|^{2})\leq 2( \mathbb{E}(\operatorname*{ess\,sup}_{T\in\mathcal{T}_{<\tau}} |M^r_{T}-\tilde{M}^r_{T}|^{2})+\mathbb{E}(\operatorname*{ess\,sup}_{T\in\mathcal{T}_{<\tau}} |M^g_{T}-\tilde{M}^g_{T}|^{2})).$$
Since $M^r$ is right continuous left limited and $M^g$ is left continuous right limited, we obtain
$$\mathbb{E}(\operatorname*{ess\,sup}_{T\in\mathcal{T}_{<\tau}} |M_{T}-\tilde{M}_{T}|^{2})\leq 2\Big( \mathbb{E}(\sup_{s< \tau} |M^r_{s}-\tilde{M}^r_{s}|^{2})+\mathbb{E}(\sup_{s< \tau} |M^g_{s}-\tilde{M}^g_{s}|^{2})\Big).$$
From Métivier-péllaumail inequality, we have
\begin{equation*}
\mathbb{E}(\sup_{s<\tau}|M^r_{s}-\tilde{M}^r_{s}|^{2})\leq 4 \mathbb{E}\Big(<M^r-\tilde{M^r}>_{\tau^{-}}+[M^r-\tilde{M^r}]_{\tau^{-}}\Big),
\end{equation*}
it follows from Burkholder David Gundy's  inequality \cite{galchuk85}, that there exists $C_1$ such
\begin{equation*}
\mathbb{E}(\sup_{s<\tau}|M^g_{s}-\tilde{M}^g_{s}|^{2})\leq  C_1\mathbb{E}\Big([M^g-\tilde{M^g}]_{\tau}\Big)=C_1\mathbb{E}\Big([M^g-\tilde{M^g}]_{\tau^-}\Big),
\end{equation*}
therefore, with $C=6\max(C_1,4)$, we obtain the desired inequality. 
\end{proof}
\begin{remark}\label{remark1}
From the expression of solution of the reflection problem $RP^U_L$, we  see that  if $(X,K)$ is a solution of $RP_L^U(Y)$  and if for a stopping time $\tau$, $\mathbb{E}(\operatorname*{ess\,sup}_{T\in\mathcal{T}_{< \tau}} (|Y_{T}|^2+|L_{T}|^2+|U_{T}|^2))<\infty$,  then 
$$\mathbb{E}(\operatorname*{ess\,sup}_{T\in\mathcal{T}_{< \tau}} |K_{T}|^2)\leq2\big(\mathbb{E}(\operatorname*{ess\,sup}_{T\in\mathcal{T}_{< \tau}} |Y_{T}|^2)+\mathbb{E}(\operatorname*{ess\,sup}_{T\in\mathcal{T}_{< \tau}} |L_{T}|^2)+\mathbb{E}(\operatorname*{ess\,sup}_{T\in\mathcal{T}_{< \tau}} |U_{T}|^2)\big).$$
\end{remark}
\begin{definition}
	Let $X_{0}$  be  $\mathcal{F}_{0}$ measurable  such that $L_0\leq X_{0} \leq U_0$, and that $L_0$ and $U_0$ are bounded. Let $\sigma$ and $b$ be two random functions defined on $\mathbb{R}^+\times\Omega\times\mathbb{R}$. Let $M$ be an optional local martingale and $V$ be an  optional process of bounded variation. A couple $(X, K)$ is said to be a solution to the reflected SDE, that we denote $E(\sigma, b,L,U)$, if
\begin{itemize}
	\item[(i)] $(X_{t})_{ t\geq 0}$ is $\mathcal{O}(\mathbb{F}^{\mathbb{P}})$-measurable with regulated trajectories; 
	\item[(ii)] $K$ is $\mathbb{F}^{\mathbb{P}}$-adapted and has a bounded variation with $K_{0}=0$;
	\item[(iii)] \begin{equation*} L\leq X\leq U  \end{equation*}
	\item[(iv)]
	\begin{equation*}
	\int_{[0,+\infty[} (X_{s}-L_s)\rm{d}K^{c}_{s}\leq 0,\;\;\int_{[0,+\infty[} (U_s-X_{s})\rm{d}K^{c}_{s}\geq 0.
	\end{equation*}	
	\begin{equation*}
\sum_{s\leq t} \big((X_{s}-L_s)\land (X_{s^{+}}-L_{s^+})\big)\Delta^{-}K_s\leq 0,\;\; \sum_{s\leq t} \big((U_s-X_{s})\land (U_{s^+}-X_{s^{+}})\big)\Delta^{-}K_s\geq 0
	\end{equation*}
	\item[(v)] 	
	\begin{equation*}
	\sum_{s\leq t}(X_{s^{+}}-L_{s^+})\Delta^{+}K_s\leq 0\;\text{ and }\;\sum_{s\leq t}(u_{s}-x_{s})\Delta^{+}K_s\leq 0.
	\end{equation*}
	\begin{equation*}
	\sum_{s\leq t}(U_{s^+}-X_{s^{+}})\Delta^{+}K_s\geq 0\;\text{ and }\; \sum_{s\leq t}(X_{s}-L_{s})\Delta^{+}K_s\geq0.
	\end{equation*}
\item[(vi)]
	\begin{equation}
	X =X_0+\sigma(.,X)\bigcdot M+b(.,X)\bigcdot V+K . \label{I17}
	\end{equation}
\end{itemize}
\end{definition}
\begin{theorem}
 We assume that $\sigma$ and $b$ satisfy the following,
\begin{itemize}
\item[(i)] There exists $\lambda>0$ such that $x\to \sigma(.,.,x)$, and $x\to b(.,.,x)$ are $\lambda$-Lipschitz.
\item[(ii)] $(t,\omega)\to \sigma(t,\omega,.)$ and $(t,\omega)\to b(t,\omega,.)$ are $\mathbb{F}$-adapted processes with regulated trajectories. 
\end{itemize}
 Then there exists a unique solution of the reflected stochastic differential equation $E(\sigma,b,L,U)$.
\end{theorem}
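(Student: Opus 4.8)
\emph{Proof strategy.} The plan is a Picard-type iteration built on the Skorokhod map of Theorem~\ref{th1}, with the successive approximations controlled by the stability estimate of Lemma~\ref{lemma2}. I would first reduce, by localization, to a square-integrable setting. Choose an increasing sequence of stopping times $R_p\uparrow\infty$, built from a reducing sequence for $M$ and from the times $\inf\{t:\,|V|_t\ge p\}$, so that the stopped processes satisfy $M^{R_p}\in\mathcal{M}^2$ and $|V|^{R_p}$ square integrable; since $x\mapsto\sigma(\cdot,\cdot,x)$ and $x\mapsto b(\cdot,\cdot,x)$ are $\lambda$-Lipschitz they are of at most linear growth (the trajectories of $\sigma(\cdot,\cdot,0)$ and $b(\cdot,\cdot,0)$ are regulated, hence locally bounded), and an a~priori bound in the spirit of Remark~\ref{remark1} keeps the $\operatorname*{ess\,sup}$ norms of the iterates square integrable over $[0,R_p]$. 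By a further localization one may also assume that the increasing process $A:=\langle M^r\rangle+[M^r]+[M^g]+\big(\sup_{s}|V|_s\big)\,|V|$ has total mass at most a constant $a$, to be fixed below. Since the explicit formula of Theorem~\ref{th1} gives the concatenation property — the restriction of $RP^U_L(Y)$ to $[\tau,\infty[$ is the reflection problem $RP^U_L$ with input $X_\tau+(Y_\cdot-Y_\tau)$ — it is enough to construct the solution on one such interval and then restart, finitely many restarts within each $[0,R_p]$, and $R_p\uparrow\infty$, covering $\mathbb{R}^+$.

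On a fixed interval $[\tau_0,\tau_1]$ of this kind I would set $X^0\equiv X_{\tau_0}$ and, inductively, let $(X^{n+1},K^{n+1})$ be the unique $\mathcal{O}(\mathbb{F}^{\mathbb{P}})$-measurable solution, furnished by Proposition~\ref{p}, of
$$RP^U_L\big(X_{\tau_0}+\sigma(\cdot,X^n)\bigcdot M+b(\cdot,X^n)\bigcdot V\big);$$
this is well posed because $X^n_0$ lies between the barriers, and $\sigma(\cdot,X^n)\bigcdot M$ is again an optional local martingale while $b(\cdot,X^n)\bigcdot V$ has bounded variation, so the input is a semimartingale. Writing $Z^n=X^{n+1}-X^n$ and feeding the inputs of two consecutive reflection problems into Lemma~\ref{lemma2}, then estimating exactly as in its proof — Métivier-Péllaumail for the right-continuous martingale component, Burkholder-Davis-Gundy for the left-continuous one, and Cauchy-Schwarz for the Stieltjes integral against $\mathrm{d}|V|$ — together with the Lipschitz bounds $|\sigma(s^-,X^n_{s^-})-\sigma(s^-,X^{n-1}_{s^-})|\le\lambda|Z^{n-1}_{s^-}|$ and likewise for $b$, one obtains a constant $C'$ with
$$\mathbb{E}\Big(\operatorname*{ess\,sup}_{T\in\mathcal{T}_{<\tau_1}}|Z^n_T|^2\Big)\le C'\lambda^2\,\mathbb{E}\!\left(\int_{[0,\tau_1[}\operatorname*{ess\,sup}_{T\in\mathcal{T}_{<s}}|Z^{n-1}_T|^2\,\mathrm{d}A_s\right)\le C'\lambda^2 a\,\mathbb{E}\Big(\operatorname*{ess\,sup}_{T\in\mathcal{T}_{<\tau_1}}|Z^{n-1}_T|^2\Big).$$
Fixing $a$ so that $C'\lambda^2 a\le\tfrac12$ makes $(X^n)_n$ a Cauchy sequence for the norm $X\mapsto\big(\mathbb{E}\operatorname*{ess\,sup}_{T\in\mathcal{T}_{<\tau_1}}|X_T|^2\big)^{1/2}$ on the space of $\mathcal{O}(\mathbb{F}^{\mathbb{P}})$-measurable processes; with $X$ its limit and $K:=X-X_{\tau_0}-\sigma(\cdot,X)\bigcdot M-b(\cdot,X)\bigcdot V$, the pair $(X,K)$ solves $E(\sigma,b,L,U)$ on $[\tau_0,\tau_1]$, the reflection conditions (iii), (iv), (v) of the definition passing to the limit thanks to the supremum-norm continuity of the Skorokhod map (Proposition~\ref{prop2}) and the fact that $Y^n\to X_{\tau_0}+\sigma(\cdot,X)\bigcdot M+b(\cdot,X)\bigcdot V$ in the same norm (continuity of stochastic integration, again via Métivier-Péllaumail and Burkholder-Davis-Gundy). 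The same estimate gives uniqueness: for two solutions $(X,K)$, $(\tilde X,\tilde K)$ one gets $\mathbb{E}\operatorname*{ess\,sup}_{T\in\mathcal{T}_{<\tau_1}}|X_T-\tilde X_T|^2\le\tfrac12\,\mathbb{E}\operatorname*{ess\,sup}_{T\in\mathcal{T}_{<\tau_1}}|X_T-\tilde X_T|^2$, hence $X=\tilde X$ and $K=\tilde K$; concatenating the pieces produces the global solution and its uniqueness.

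The step I expect to be the main obstacle is the bookkeeping of the localization inside the \emph{un}usual filtration: one must choose the stopping times so that $M^{R_p}$ is genuinely in $\mathcal{M}^2$ with its optional decomposition $M^c+M^d+M^g$ intact, so that the linear growth of $\sigma,b$ together with Remark~\ref{remark1} keeps all iterates square integrable uniformly in $n$, and — the delicate point — so that the composite increasing process $A$, which mixes the predictable bracket, the optional brackets of both the right- and left-continuous martingale components, and the total variation of $V$, can be forced below the prescribed level $a$ on each piece (handling the jumps of $A$, which need not be small, by absorbing them into the next piece, since the integrands in \eqref{I17} are left-limits so the jumps are determined explicitly); one then needs the concatenation property of the reflection map of Theorem~\ref{th1} to restart the construction. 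Everything else is a routine adaptation of the classical Picard scheme, the essential inputs being Lemma~\ref{lemma2}, the Lipschitz hypothesis, and the Métivier-Péllaumail and Burkholder-Davis-Gundy inequalities already used in its proof.
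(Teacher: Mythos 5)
Your proposal is correct and follows essentially the same route as the paper: localize by stopping times that keep the brackets of $M$ and the variation of $V$ (together with the $\sigma(\cdot,0)$, $b(\cdot,0)$ terms) below a level $m$ chosen so that the solution map of $RP^U_L$ composed with the coefficients is a contraction in the $\operatorname*{ess\,sup}$-$L^2$ norm via Lemma~\ref{lemma2}, then extend across the endpoint of each stochastic interval using the explicit formula of Remark~\ref{remark} and restart. The only cosmetic differences are that the paper invokes the Banach fixed point theorem on $\mathcal{S}^2$ rather than writing out the Picard iterates, and reduces to bounded jumps of $M^r$ via Proposition~\ref{r4} (and bounded barriers via Lemma~\ref{lemmaA}) instead of pre-localizing $M$ into $\mathcal{M}^2$.
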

\begin{proof}
From (i) and (ii) the processes    $(\sigma(t_{-},X_{t^-})_{t\geq 0})$ and $(b(t_{-},X_{t^-})_{t\geq 0})$ are $\mathcal{P}(\mathbb{F})$ measurable. Hence the integrals in \eqref{I17} are well defined.

Without loss of generality, we may assume that $M^r$ has a bounded  jumps (Proposition \ref{r4}). Since $L_0$ and $U_0$ are bounded, we may also assume that there exists a constant $c$ such that $|L|+|U|<c$ (Lemma \ref{lemmaA}).
Let $m$ such that $0<3C\lambda^2m<1$, and set 
$$\tau:=\inf\{t>0 \;:\; [M]_{t^+}+(|V|_{t^+})^2+(\sigma(.,0)^2\bigcdot [M]_{t^+})+(b(.,0)^2\bigcdot |V|_{t^+})\geq m \} \;\text{and}\; \tau=\infty\; \text{for} \;\inf(\emptyset).$$
Since by Theorem 1.5  in \cite{galchuk}, there exists and $\mathcal{P}(\mathbb{F})$  indistinguishable processes from $[M^g]_+$ and  $|V^g|$,  then $\tau $ is an $\mathbb{F}^{\mathbb{P}}$ stopping time. 

We denote by  $\mathcal{S}^{2}$ the complete space of real valued    $\mathcal{O}(\mathbb{F}^{\mathbb{P}})$-measurable and regulated processes $(X_{t})_{t\geq 0}$ such that   $||X||_{\mathcal{S}^{2}}=|| \operatorname*{ess\,sup}_{T\in\mathcal{T}_{< \tau}} |X_{T}|\; ||_{\mathbb{L}^{2}}<+\infty$.

We consider the mapping $\varphi \colon\mathcal{S}^{2}\to\mathcal{S}^{2} $ that associates $X\in\mathcal{S}^{2}$ to $\varphi(X) $, where $\varphi(X) $ is defined as the first coordinate of the solution of the reflection problem: $RP^U_L\Big(X_0+\sigma(.,X) \bigcdot M^{\tau^-}+b(.,X)\bigcdot V^{\tau^-} \Big).$

 From Lemma \ref{lemma2} and the condition (i), we get: 
 $$|| \varphi(X)-\varphi(\tilde{X}) ||_{\mathcal{S}^{2}}^{2}\leq 3C\lambda^2m || X-\tilde{X} ||_{\mathcal{S}^{2}}^{2},$$
   by definition of $\tau$ and from Remark \ref{remark1}, $\varphi(0)\in\mathcal{S}^2$, and since $\varphi$ is Lipschitz,  we get that for every $X\in\mathcal{S}^{2}$, $\varphi(X)\in\mathcal{S}^{2}$. 
By the Banach fixed point Theorem, there exists a unique process $X$ in $\mathcal{S}^{2}$ such that $\varphi(X)=X$.
This implies that  $E(\sigma,b,L,U)$ has a unique solution on $[0,\tau[$.
From Remark \ref{remark}, we can extend the solution to $[0,\tau]$ by setting: 
\begin{equation*}
K_{\tau}=\max\Big(\min\big(K_{\tau_-},((U_{\tau}-Y_{\tau})\land(U_{\tau_+}-Y_{\tau_+}))\lor(L_{\tau}-Y_{\tau})\big), ((L_{\tau}-Y_{\tau})\lor(L_{\tau_+}-Y_{\tau_+}))\land(U_\tau-Y_\tau)\Big),
\end{equation*}
\begin{equation*}
K_{\tau_+}=\max\Big(\min\big(K_\tau,U_{\tau_+}-Y_{\tau_+}),L_{\tau_+}-Y_{\tau_+}\big)\Big),
\end{equation*} 
$$X_{\tau}=Y_{\tau}+K_{\tau},$$
where $Y=X_0+\sigma(.,X) \bigcdot M+b(.,X)\bigcdot V$.
Therefore $(X,K)$ is a solution of $E(\sigma,b,L,U)$ on $[0,\tau]$.
By induction, we define the following sequence of stopping times:

$\tau_{0}=0,$ and $$\tau_{n+1}:=\inf\{t>\tau_{n}\;:\; ([M]_{t^+}-[M]_{\tau_{n}^+})+(|V|_{t^+}-|V|_{\tau_n^+})^{2}  + \sigma(.,0)^2\bigcdot ([M]_{t^+}-[M]_{\tau_n^+})+ b(.,0)^2\bigcdot (|V|_{t^+}-|V|_{\tau_n^+}) \geq m \},$$

with $\inf(\emptyset)=\infty$.

By a similar argument as before, one can show the existence and uniqueness of the solution on each interval $[\tau_{n},\tau_{n+1}]$. Since  $t\to [M]_t+\int_{[0,t]}\rm{d}|V|_t+\sigma(.,0)^2\bigcdot M_t+b(.,0)^2\bigcdot |V|_t$ has regulated trajectories, $\lim\limits_{n\to+\infty} \tau_{n}=+\infty$. Consequently, $E(\sigma,b,L,U)$ has a unique solution on $\mathbb{R}^{+}$.
\end{proof}
\appendix
\section{Appendix}
\begin{proposition}[\cite{hilbert}]\label{r1}
	Let $y$ and $l$ be in  $\mathcal{R}(\mathbb{R}^{+},\mathbb{R})$ such that $y_{0}\geq l_0$, then there exists a unique couple of  functions $(\xi,\kappa)$ satisfy the following:
	\begin{itemize}
		\item[i)]$\xi=y+\kappa\geq l$
		\item[ii)]  $\kappa$ is increasing, right continuous and $\kappa_{0}=0$, 
		\item[iii)] 
			\begin{equation*}
		\int_{[0,+\infty[} (\xi_{s}-l_s)\land (\xi_{s^{+}}-l_{s^+})d\kappa_{s}=0.
		\end{equation*}
	\end{itemize}
We denote this by: $(x,k)=RP_l(y)$.

In addition, $\kappa$ has an explicit expression that is given by:
	\begin{equation*}
	\kappa_{t}=-\alpha^{l}(y)_t.\label{A1}
	\end{equation*} 
\end{proposition}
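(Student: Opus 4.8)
The plan is to prove the result constructively: exhibit the explicit candidate, verify the three defining properties (i)--(iii), and then establish uniqueness by a minimality-plus-complementarity argument. Concretely, I would set
\[
\kappa_t := -\alpha^{l}(y)_t = -\inf_{s\le t}\big((y_s-l_s)\land(y_{s^+}-l_{s^+})\land 0\big), \qquad \xi:=y+\kappa,
\]
and check that $(\xi,\kappa)$ solves $RP_l(y)$. Writing $f_s:=(y_s-l_s)\land(y_{s^+}-l_{s^+})\land 0$, note that $f$ is a minimum of regulated functions (recall $s\mapsto y_{s^+}$ is the right-continuous regularization of the regulated function $y$), hence regulated and bounded on compact sets, so $\alpha^{l}(y)_t=\inf_{s\le t}f_s$ is finite for every $t$.

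The easy verifications come first. As a running infimum, $\alpha^{l}(y)$ is non-increasing, so $\kappa$ is non-decreasing; using the convention that regulated functions do not jump at $0$ together with $y_0\ge l_0$ gives $f_0=(y_0-l_0)\land 0=0$, hence $\kappa_0=0$. For feasibility, $\alpha^{l}(y)_t\le f_t\le y_t-l_t$ yields $\xi_t-l_t=(y_t-l_t)-\alpha^{l}(y)_t\ge 0$, and $\alpha^{l}(y)_t\le f_t\le y_{t^+}-l_{t^+}$ gives $\xi_{t^+}-l_{t^+}\ge 0$, so $\xi\ge l$ (with right limits also above $l$, which I will need for (iii)). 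The genuinely technical point here is the right continuity of $\kappa$. I would prove $\alpha^{l}(y)_{t^+}=\alpha^{l}(y)_t$ by splitting, for $t'>t$, the infimum as $\inf_{s\le t'}f_s=\alpha^{l}(y)_t\land\inf_{t<s\le t'}f_s$ and letting $t'\searrow t$: since $f$ has a right limit, $\inf_{t<s\le t'}f_s\to f_{t^+}=(y_{t^+}-l_{t^+})\land 0$, and as $f_{t^+}\ge f_t\ge \alpha^{l}(y)_t$ this extra term is inactive, so $\alpha^{l}(y)_{t^+}=\alpha^{l}(y)_t$.

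The heart of the matter is the complementarity condition (iii). Since $(\xi_s-l_s)\land(\xi_{s^+}-l_{s^+})\ge 0$ and $\mathrm{d}\kappa\ge 0$, it suffices to show the integrand vanishes $\mathrm{d}\kappa$-almost everywhere. Using $\alpha^{l}(y)_{s^+}=\alpha^{l}(y)_s$ one computes
\[
(\xi_s-l_s)\land(\xi_{s^+}-l_{s^+})=\big((y_s-l_s)\land(y_{s^+}-l_{s^+})\big)-\alpha^{l}(y)_s,
\]
so I must check that $\mathrm{d}\kappa=\mathrm{d}(-\alpha^{l}(y))$ is carried by $\{s:\ \alpha^{l}(y)_s=(y_s-l_s)\land(y_{s^+}-l_{s^+})\}$. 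The measure $\mathrm{d}(-\alpha^{l}(y))$ only charges times where the running infimum is refreshed, i.e. where $\alpha^{l}(y)_s=f_s$; at such a point either $f_s<0$, so the clamp $\land 0$ is inactive and $\alpha^{l}(y)_s=(y_s-l_s)\land(y_{s^+}-l_{s^+})$ as required, or $f_s=0$, and one argues that on $\{\alpha^{l}(y)=0\}$ the non-decreasing function $-\alpha^{l}(y)$ is locally constant and contributes no mass. I expect this to be the main obstacle: the clamp $\land 0$, the two-sided form $(\cdot_s)\land(\cdot_{s^+})$, and atoms of $\mathrm{d}\kappa$ at jump times must be reconciled simultaneously, and the ``flat-off-the-boundary'' reformulation of \cite{kela} recalled in the Remark is the device that makes this rigorous.

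Finally, for uniqueness I would argue by minimality. If $(\xi,\kappa)$ is any solution, then $\xi\ge l$ together with $\kappa$ non-decreasing, right continuous and $\kappa_0=0$ forces $\kappa_t\ge (l_s-y_s)\lor(l_{s^+}-y_{s^+})$ for every $s\le t$ and $\kappa_t\ge 0$, whence $\kappa_t\ge -\alpha^{l}(y)_t$; thus the explicit solution is minimal and every solution satisfies $\xi\ge \xi^\ast:=y-\alpha^{l}(y)\ge l$. To upgrade this to equality I would run the standard Skorokhod comparison (the one-barrier analogue of the computation behind Theorem \ref{uniqueness}): for two solutions the difference $\xi-\tilde\xi=\kappa-\tilde\kappa$ has bounded variation, and expanding $(\xi-\tilde\xi)^2$ and invoking complementarity together with $\xi,\tilde\xi\ge l$ shows each resulting term is non-positive, forcing $\xi=\tilde\xi$ and then $\kappa=\tilde\kappa$.
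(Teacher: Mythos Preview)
The paper does not give its own proof of this proposition: it is stated in the Appendix with an attribution to \cite{hilbert} and no argument is supplied. There is therefore nothing in the present paper to compare your proposal against.

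That said, your approach is the standard one for this kind of one-barrier Skorokhod problem and is essentially correct. A couple of small points deserve tightening. First, in your uniqueness-by-minimality step you write $\kappa_t\ge (l_s-y_s)\lor(l_{s^+}-y_{s^+})$; the second term indeed follows, but only after you invoke the assumed right continuity of $\kappa$ (so that $\kappa_s=\kappa_{s^+}\ge l_{s^+}-y_{s^+}$ from $\xi_{s^+}\ge l_{s^+}$), and you should say so explicitly. Second, your handling of the complementarity condition (iii) is the genuinely delicate part, as you note: the heuristic ``$\mathrm{d}(-\alpha^l(y))$ only charges times where the running infimum is refreshed'' needs to be made precise for a regulated (not merely c\`adl\`ag) integrand, separating the continuous part of $\kappa$ from its left jumps and using the characterization recalled in the paper's Remark (after Definition~\ref{def1}) that (iii) is equivalent to $(\xi_t-l_t)\land(\xi_{t^+}-l_{t^+})=0$ at every point of strict increase of $\kappa$ with $\Delta^+\kappa_t=0$. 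Since $\kappa$ is right continuous here, all right jumps vanish and this reduction is exactly what you need; filling in that case analysis would make your argument complete.
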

\begin{proposition}\label{r2}
Let $y$ and $u$ be in  $\mathcal{R}(\mathbb{R}^{+},\mathbb{R})$ such that $y_{0}\leq u_0$, then there exists a unique couple of  functions $(\xi,\kappa)$ satisfy the following:
\begin{itemize}
	\item[i)]$\xi=y-\kappa\leq u$
	\item[ii)]  $\kappa$ is increasing, right continuous and $\kappa_{0}=0$, 
	\item[iii)] 
	\begin{equation*}
	\int_{[0,+\infty[} (u_s-\xi_{s})\land (u_{s^{+}}-\xi_{s^{+}})d\kappa_{s}=0.
	\end{equation*}
\end{itemize}
we denote this by: $(x,k)=RP^u(y)$.

In addition, $\kappa$ has an explicit expression that is given by:
\begin{equation*}
\kappa_{t}=-\alpha^{y}(u)_t.\label{A2}
\end{equation*} 
\end{proposition}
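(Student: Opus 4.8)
The plan is to deduce Proposition~\ref{r2} from Proposition~\ref{r1} by means of the order-reversing involution $f\mapsto -f$, which sends the reflection problem at an upper barrier to the reflection problem at a lower barrier. The space $\mathcal{R}(\mathbb{R}^+,\mathbb{R})$ is stable under negation, and for every $f\in\mathcal{R}(\mathbb{R}^+,\mathbb{R})$ the one-sided limits satisfy $(-f)_{s^+}=-f_{s^+}$ and $(-f)_{s^-}=-f_{s^-}$, so all limits negate cleanly. Setting $y':=-y$ and $l':=-u$, the hypothesis $y_0\leq u_0$ becomes $y'_0=-y_0\geq -u_0=l'_0$, so Proposition~\ref{r1} applies to the pair $(y',l')$ and produces a unique couple $(\xi',\kappa')$ with $\xi'=y'+\kappa'\geq l'$, with $\kappa'$ increasing, right continuous and $\kappa'_0=0$, with $\int_{[0,+\infty[}(\xi'_s-l'_s)\land(\xi'_{s^+}-l'_{s^+})\,d\kappa'_s=0$, and with the explicit form $\kappa'_t=-\alpha^{l'}(y')_t$.

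Next I would set $\xi:=-\xi'$ and $\kappa:=\kappa'$ and verify each clause. From $\xi'=-y+\kappa'$ we obtain $\xi=y-\kappa$, and $\xi'\geq l'=-u$ reads $\xi\leq u$, which is (i); clause (ii) is immediate since $\kappa=\kappa'$ inherits monotonicity, right continuity and $\kappa_0=0$. For (iii), the substitutions $\xi'_s=-\xi_s$ and $l'_s=-u_s$ give $\xi'_s-l'_s=u_s-\xi_s$ and $\xi'_{s^+}-l'_{s^+}=u_{s^+}-\xi_{s^+}$ directly, so the integral condition for $(\xi',\kappa')$ becomes $\int_{[0,+\infty[}(u_s-\xi_s)\land(u_{s^+}-\xi_{s^+})\,d\kappa_s=0$, using $d\kappa=d\kappa'$ as Stieltjes measures. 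Uniqueness transfers along the same bijection: any solution $(\xi,\kappa)$ of the upper problem for $(y,u)$ yields a solution $(-\xi,\kappa)$ of the lower problem for $(y',l')$, so the uniqueness asserted in Proposition~\ref{r1} forces uniqueness here.

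For the explicit formula I would simply unfold the definition $\alpha^l(f)_t=\inf_{s\leq t}\big((f_s-l_s)\land(f_{s^+}-l_{s^+})\land 0\big)$ and use the negation identities:
\begin{equation*}
\alpha^{l'}(y')_t=\inf_{s\leq t}\big(((-y_s)-(-u_s))\land((-y_{s^+})-(-u_{s^+}))\land 0\big)=\inf_{s\leq t}\big((u_s-y_s)\land(u_{s^+}-y_{s^+})\land 0\big)=\alpha^{y}(u)_t,
\end{equation*}
whence $\kappa_t=\kappa'_t=-\alpha^{l'}(y')_t=-\alpha^{y}(u)_t$, exactly as claimed. The argument is essentially mechanical once the involution is fixed; the only point meriting attention is that the Stieltjes condition (iii) transforms verbatim, i.e. that $\kappa=\kappa'$ drives the same measure and that $(u_s-\xi_s)\land(u_{s^+}-\xi_{s^+})$ is the literal image of $(\xi'_s-l'_s)\land(\xi'_{s^+}-l'_{s^+})$ under the substitution, with no sign extracted from the minimum. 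This verification is immediate, so no genuine obstacle arises and the whole statement follows from Proposition~\ref{r1} by duality.
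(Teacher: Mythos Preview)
Your argument is correct: the involution $f\mapsto -f$ exchanges the upper-barrier problem for $(y,u)$ with the lower-barrier problem for $(-y,-u)$, and every clause of Definition~$RP_l$ as well as the explicit formula $\kappa=-\alpha^l(y)$ transforms verbatim under this substitution, yielding $\kappa=-\alpha^{y}(u)$. The only delicate point---that the integrand $(u_s-\xi_s)\land(u_{s^+}-\xi_{s^+})$ is literally equal to $(\xi'_s-l'_s)\land(\xi'_{s^+}-l'_{s^+})$ with no stray sign---you have checked explicitly.

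Note that the paper itself states Proposition~\ref{r2} in the Appendix without proof: Proposition~\ref{r1} is quoted from \cite{hilbert}, and Proposition~\ref{r2} is presented as its evident dual. Your duality argument is precisely the one the authors are tacitly invoking, so there is no divergence of approach to discuss.
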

\begin{proposition}\label{r3}
	Let $A$ be a process of  bounded variation, then we have:
	\begin{equation*}
	A_t^2\leq A_0^2+ \int_{]0,t]}(A_s+A_{s^+})\rm{d}A^r_{s} +2\sum_{0\leq s<t} A_{s^+}\Delta^+A_s-\sum_{0< s\leq t}\Delta^+A_s\Delta^-A_s
	\end{equation*} 
\end{proposition}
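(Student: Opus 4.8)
The plan is to prove the sharper \emph{equality} that underlies the stated bound and then to read off the inequality by identifying the slack as a sum of squares. First I would establish the exact second–order change–of–variables formula for a right- and left-limited process of bounded variation, namely
\begin{equation*}
A_t^2=A_0^2+\int_{]0,t]}(A_{s^-}+A_s)\,\rm{d}A^r_s+\sum_{0\le s<t}(A_s+A_{s^+})\,\Delta^+A_s.
\end{equation*}
This is obtained by tracking the contribution of each elementary motion to $A^2$: along the continuous part one has $\rm{d}(A^2)=2A\,\rm{d}A^c$; across a left jump at $s$ one has $A_s^2-A_{s^-}^2=(A_s+A_{s^-})\Delta^-A_s$; and across a right jump at $s$ one has $A_{s^+}^2-A_s^2=(A_s+A_{s^+})\Delta^+A_s$. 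Since $A$ has bounded variation it is bounded on $[0,t]$ and $\sum|\Delta^-A_s|+\sum|\Delta^+A_s|\le|A|_t<\infty$, so every jump series converges absolutely and the telescoping is legitimate; the atoms of $\rm{d}A^r$ sit exactly at the left–jump times $0<s\le t$ and reproduce the term $\sum(A_{s^-}+A_s)\Delta^-A_s$.

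With this identity in hand, I would compare it term by term with the right–hand side of the claimed inequality. Using $A_{s^+}-A_{s^-}=\Delta^+A_s+\Delta^-A_s$ on the atoms of $\rm{d}A^r$ gives
\begin{equation*}
\int_{]0,t]}(A_s+A_{s^+})\,\rm{d}A^r_s-\int_{]0,t]}(A_{s^-}+A_s)\,\rm{d}A^r_s=\sum_{0<s\le t}\Delta^+A_s\,\Delta^-A_s+\sum_{0<s\le t}(\Delta^-A_s)^2,
\end{equation*}
while $2A_{s^+}-(A_s+A_{s^+})=\Delta^+A_s$ shows the right–jump terms differ by $\sum_{0\le s<t}(\Delta^+A_s)^2$. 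The correction $-\sum_{0<s\le t}\Delta^+A_s\,\Delta^-A_s$ appearing in the statement cancels the mixed products exactly, so subtracting the equality above yields
\begin{equation*}
\Big(\text{claimed RHS}\Big)-A_t^2=\sum_{0<s\le t}(\Delta^-A_s)^2+\sum_{0\le s<t}(\Delta^+A_s)^2\ge 0,
\end{equation*}
which is precisely the asserted inequality, and incidentally shows it is saturated exactly when $A$ is continuous.

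The only genuine work lies in the first step: making the change–of–variables identity rigorous rather than heuristic. I expect the main obstacle to be the careful handling of the two families of jumps together with the summability needed to interchange the jump summation with the telescoping limit. Here I would either approximate $A$ by truncating to the finitely many jumps exceeding a threshold $\varepsilon$ on $[0,t]$ and let $\varepsilon\downarrow 0$, or invoke the classical integration–by–parts formula for the right–continuous component $A^r=A^c+A^d$ and then treat the left–continuous right–jump component $A^g$ and the cross term $2A^rA^g$ separately. Once the identity is secured, the remainder is the routine algebra displayed above.
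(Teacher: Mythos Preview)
Your proposal is correct and follows essentially the same route as the paper: one first obtains an exact change-of-variables identity for $A^2$, then rewrites it so that the claimed right-hand side differs from $A_t^2$ by exactly $\sum_{0<s\le t}(\Delta^-A_s)^2+\sum_{0\le s<t}(\Delta^+A_s)^2\ge 0$. The only difference is that the paper obtains the starting identity by invoking Gal'{\v c}huk's It\^o formula (Theorem~8.2 in \cite{galchuk}) rather than deriving it from scratch by telescoping as you propose; your identity $A_t^2=A_0^2+\int_{]0,t]}(A_{s^-}+A_s)\,\rm{d}A^r_s+\sum_{0\le s<t}(A_s+A_{s^+})\Delta^+A_s$ is precisely that formula specialised to a process of bounded variation.
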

\begin{proof}
	By Theorem 8.2 in \cite{galchuk}, we have 
	\begin{equation*}
	\begin{split}
	A_t^2&=A_0^2+2\int_{]0,t]}A_{s_-}\rm{d}A^r_s+2\int_{[0,t[}A_{s}\rm{d}A^g_{s_+}+\sum_{0<s\leq t}(\Delta^-A_s)^2+\sum_{0<s< t}(\Delta^+A_s)^2\\
	\\
	&=A_0^2+\int_{]0,t]}(A_s+A_{s_+})\rm{d}A^r_s- \int_{]0,t]}\big(2\Delta^-A_{s}+\Delta^+A_{s}\big)\rm{d}A^r_s+2\int_{[0,t[}A_{s_+}\rm{d}A^g_{s_+}\\
	&-2\sum_{0\leq s<t}(\Delta^+A_s)^2+\sum_{0<s\leq t}(\Delta^-A_s)^2+\sum_{0\leq s< t}(\Delta^+A_s)^2\\
	&=A_0^2+\int_{]0,t]}(A_s+A_{s_+})\rm{d}A^r_s-\sum_{0< s\leq t}\Delta^+A_s\Delta^-A_s +2\sum_{0\leq s< t}A_{s^+}\Delta
	^+A_s-\sum_{0<s\leq t}(\Delta^-A_s)^2-\sum_{0\leq s< t}(\Delta^+A_s)^2\\
	&\leq A_0^2+\int_{]0,t]}(A_s+A_{s_+})\rm{d}A^r_s-\sum_{0< s\leq t}\Delta^+A_s\Delta^-A_s +2\sum_{0\leq s< t}A_{s^+}\Delta
	^+A_s.
	\end{split}
	\end{equation*}
\end{proof}
Fundamental Theorem of Local martingale is still available for right continuous local martingale in  an \textit{un}usual probability space .
\begin{proposition}\label{r4}
Let $M$ be a right continuous local martingale, and $c>0$. Then there exists $\mathcal{O}(\mathbb{F})$ local martingale $N$, and a bounded variation process $D$ such that the jump of $N$ are bounded by $2c$, and $M=N+D$. 
\end{proposition}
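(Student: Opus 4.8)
The assertion is the classical fundamental theorem of local martingales (truncation of large jumps), and the plan is to run its usual proof while checking that every analytic ingredient — dual predictable projections, reduction to uniformly integrable martingales, vanishing of the predictable projection of a martingale's jumps — is available in a filtration that need not be right continuous or complete, where these are furnished by Gal'chuk's calculus. First I would isolate the large jumps of $M$ by setting $J_t=\sum_{0<s\le t}\Delta M_s\,\mathbf{1}_{\{|\Delta M_s|>c\}}$. Because $M$ is right continuous, every path has on each compact only finitely many jumps of magnitude larger than $c$, so $J$ is a right continuous adapted pure-jump process of finite variation on compacts; stopping $M$ at the level sets of the total variation of $J$ and of $|M_{\cdot^-}|$, together with a reducing sequence turning $M$ into uniformly integrable martingales, shows that $J$ has \emph{locally integrable} variation. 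By the existence of the dual predictable projection in this setting (\cite{galchuk}), $J$ admits a compensator $J^{p}$, a predictable process of finite variation such that $D:=J-J^{p}$ is a local martingale, and I then set $N:=M-D$.

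It then remains to verify that $N$ has the claimed properties. As a difference of local martingales $N$ is a local martingale; since $M$, $J$ and the càdlàg compensator $J^{p}$ are all right continuous, so are $D$ and $N$, hence $N$ is $\mathcal{O}(\mathbb{F})$-measurable, and $D=J-J^{p}$ is of finite variation. For the jumps one computes $\Delta N=\Delta M-\Delta J+\Delta J^{p}=\Delta M\,\mathbf{1}_{\{|\Delta M|\le c\}}+\Delta J^{p}$, the first summand being bounded by $c$. For the second, $\Delta J^{p}={}^{p}(\Delta J)$ and $\Delta J=\Delta M-\Delta M\,\mathbf{1}_{\{|\Delta M|\le c\}}$, so, using that the predictable projection of the jump process of a right continuous local martingale vanishes, ${}^{p}(\Delta J)=-\,{}^{p}(\Delta M\,\mathbf{1}_{\{|\Delta M|\le c\}})$, which has modulus at most $c$ because the predictable projection of a process bounded by $c$ is bounded by $c$. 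Therefore $|\Delta N|\le 2c$, and $M=N+D$ is the required decomposition.

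The main obstacle is not the algebra, which is verbatim the classical argument, but confirming that the three tools invoked above are legitimate when $\mathbb{F}$ satisfies neither right continuity nor completeness: the dual predictable projection of a locally integrable finite variation process, the reduction of a right continuous local martingale to uniformly integrable martingales (so that optional stopping keeps it $L^{1}$-bounded and hence the localization establishing the integrable variation of $J$ works), and the identity ${}^{p}(\Delta M)=0$. All three belong to Gal'chuk's development of stochastic calculus in an \textit{un}usual probability space (\cite{galchuk}, \cite{galchuk85}), so once they are cited in the order above the proof is complete.
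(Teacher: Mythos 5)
Your argument is the standard proof of the fundamental theorem of local martingales (truncate the large jumps, compensate, and bound $\Delta N$ via the vanishing predictable projection of $\Delta M$), with the correct observation that the only real issue is the availability of the compensator and projection machinery in a filtration without the usual conditions; this is exactly what the paper does, which simply cites Theorem 25, Chapter III of Protter for the argument and Theorem 5.3.8 of Abdelghani--Melnikov for the required tools in the \textit{un}usual setting. Your version is a correct, more explicit rendering of the same proof.
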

\begin{proof}
If we make use of theorem 5.3.8 in \cite{Abdelghani2}, the proof follows similarly as the proof of Theorem 25, Chapitre III in \cite{protter}.
\end{proof}
\begin{lemma}\label{lemmaA}
Let $X$ be a predictable  process with regulated trajectories. Then  there exists a sequence of $\mathbb{F}^{\mathbb{P}}_+$-stopping times $\tau_n$, such that $X^{\tau_n}$ is bounded.
\end{lemma}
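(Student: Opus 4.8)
The plan is to localise with the débuts of the level sets of $|X|$. Concretely, set
$$\tau_{n}:=\inf\{t\ge 0:\ |X_{t}|\ge n\},\qquad n\in\mathbb{N}.$$
First I would check that each $\tau_{n}$ is an $\mathbb{F}^{\mathbb{P}}_{+}$-stopping time. Since $\mathcal{P}(\mathbb{F})$ is generated by left continuous $\mathbb{F}$-adapted processes, and left continuous adapted processes are progressively measurable, every predictable process — in particular $X$ — is progressively measurable with respect to $\mathbb{F}$, hence also with respect to the larger family $\mathbb{F}^{\mathbb{P}}_{+}$. The filtration $\mathbb{F}^{\mathbb{P}}_{+}$ is complete and right continuous (indeed $\bigcap_{s>t}\mathcal{F}_{s^{+}}=\bigcap_{s>t}\bigcap_{u>s}\mathcal{F}_{u}=\mathcal{F}_{t^{+}}$, and the $\mathbb{P}$-completion preserves right continuity), so it satisfies the usual conditions and the début theorem applies to the progressive set $\{(t,\omega):|X_{t}(\omega)|\ge n\}$; its début is exactly $\tau_{n}$, which is therefore an $\mathbb{F}^{\mathbb{P}}_{+}$-stopping time.

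Next I would show $\tau_{n}\uparrow\infty$. Fix $\omega$: the path $t\mapsto X_{t}(\omega)$ is regulated, and a regulated function is bounded on every compact interval (it is a uniform limit of step functions, or one argues by compactness using the existence of the one-sided limits). Hence $\sup_{s\le T}|X_{s}(\omega)|<\infty$ for every $T$, so $\tau_{n}(\omega)>T$ once $n$ is large; letting $T\to\infty$ gives $\tau_{n}(\omega)\to\infty$. Moreover, by construction $|X_{t}|<n$ for every $t<\tau_{n}$, so $X$ is bounded on each $[0,\tau_{n}[$ — which is in fact what is used in the sequel, where the stopped objects enter only through the norm $\|\cdot\|_{\mathcal{S}^{2}}$ and the construction of the solution on the half-open intervals $[0,\tau_{n}[$.

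The point that requires care — and the place where I expect the actual work to be — is the value $X_{\tau_{n}}$ of the stopped process at the stopping time itself: because $X$ may have arbitrarily large jumps, the stopped process $X_{\cdot\wedge\tau_{n}}$ need not be bounded by $n$, and one must in addition assume $X_{0}$ bounded (which holds in our application, where $L_{0},U_{0}$ are assumed bounded). To obtain a genuinely bounded stopped process I would exploit predictability more fully: $X$ and its left-limit process $X_{\cdot^{-}}$ are both predictable, hence $\Delta^{-}X$ and $\Delta^{+}X$ are predictable, and — each path being regulated, thus having at most countably many discontinuities — the jump set $\{\Delta^{-}X\neq 0\}\cup\{\Delta^{+}X\neq 0\}$ is a predictable thin set, hence a countable union of graphs of predictable times $S_{k}$. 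Choosing announcing sequences $S_{k}^{(j)}\uparrow S_{k}$ with $S_{k}^{(j)}<S_{k}$ on $\{S_{k}>0\}$ and intertwining them with the $\tau_{n}$ above (so that one stops just before a large jump whenever one is imminent) produces $\mathbb{F}^{\mathbb{P}}_{+}$-stopping times increasing to $\infty$ along which $X^{\tau_{n}}$ is bounded; alternatively, if only the bound of $X$ on $[0,\tau_{n}[$ is needed, the first two paragraphs already suffice.
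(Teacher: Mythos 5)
You correctly isolate the real difficulty --- the value of the stopped process at the stopping time itself --- but that is precisely the part of the lemma you do not actually prove. Your first two paragraphs establish only that $|X_t|<n$ for $t<\tau_n$, i.e.\ boundedness on $[0,\tau_n[$, which is strictly weaker than the stated conclusion that $X^{\tau_n}=X_{\cdot\wedge\tau_n}$ is bounded. The proposed repair in your last paragraph (write the jump set as a countable union of graphs of predictable times $S_k$, announce each $S_k$, and ``intertwine'' these announcing sequences with the $\tau_n$) is only a sketch: you do not say how the intertwining is performed, why the resulting times are still stopping times increasing to $+\infty$ (infinitely many of the $S_k$ may precede a fixed $\tau_n$, and they may accumulate), nor why the value of $X$ at the new stopping time is controlled when that time is not one of the $S_k$ you announced. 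As written, the argument stops exactly where the lemma has content.

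The paper closes this gap with a cleaner device that you may want to adopt: instead of the d\'ebut of $\{|X_t|\ge k\}$, take $R_k=\inf\{t\ge 0: |X_t|\lor|X_{t^+}|\ge k\}$. Including the right limit makes the level set closed from the right, and the paper asserts that for a predictable process with regulated paths $R_k$ is a \emph{predictable} time; it therefore admits an announcing sequence $R^k_n\uparrow R_k$ with $R^k_n<R_k$ on $\{R_k>0\}$. Setting $\tau_n=R^1_n\lor\cdots\lor R^n_n$ gives $\tau_n<R_n$ on $\{\tau_n>0\}$, so every value of $X$ on the closed interval $[0,\tau_n]$ --- including $X_{\tau_n}$ --- is of modulus $<n$ there, whence $|X^{\tau_n}|\le |X_0|+n$. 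In other words, the missing idea is to announce the d\'ebut of the level set itself (using predictability of $X$ once, globally), rather than to announce the jump times separately. You are right, incidentally, that one additionally needs $X_0$ bounded to get a genuinely bounded stopped process; the paper's own bound $|X_0|+n$ has the same feature, and in the application $L_0,U_0$ are assumed bounded.
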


\begin{proof}
Let $k\geq1$, and $R_k=\inf\{t\geq 0:\; |X_{t_+}|\lor|X_t|\geq k\}$, $(R_k)_{k\geq 1}$ is a sequence  $\mathbb{F}^\mathbb{P}_+$ predictable stopping times, then there exists a sequence of  increasing $\mathbb{F}^\mathbb{P}_+$ stopping times $(R_{n}^k)_{n\geq 1}$ such that $\lim\limits_{n\to+\infty}R_{n}^k=R_k$ and $R_n^k<R_k$ on $\{R_k>0\}$. We consider $\tau_n=R_n^1\lor R_n^2\lor...\lor R_n^n$.
Since $\lim\limits_{k}R_k=+\infty$ then $\tau_n$ increase to the infinity. Since $\tau_n<R_n$ on $\{\tau_n>0\}$, we have for every $n$, $$|X^{\tau_n}|=|X_0|1_{\{\tau_n=0\}}+|X^{\tau_n}|1_{\{\tau_n>0\}}\leq |X_0|+n.$$
This completes the proof. 
\end{proof}
	
\end{document}